\numberwithin{equation}{section}
\newtheorem{theorem}{Theorem}[section]
\newtheorem{proposition}[theorem]{Proposition}
\newtheorem{corollary}[theorem]{Corollary}
\newtheorem{definition}[theorem]{Definition}
\newtheorem{remark}[theorem]{Remark}
\newtheorem{lemma}[theorem]{Lemma}
\newcommand{\jp}[1]{{\left\langle{#1}\right\rangle}}
\begin{document}

\title[Global Properties in Nonharmonic Analysis]{On the global properties of Fourier multipliers in the nonharmonic analysis setting}



\author[Wagner A.A. de Moraes]{Wagner Augusto Almeida de Moraes}
\address{Ghent University, 
	Department of Mathematics: Analysis, Logic and Discrete Mathematics, 
	Ghent, Belgium 
}
\email{wagneraugusto.almeidademoraes@ugent.be}
%
%

\subjclass[2020]{58J32, 42B05, 35H10, 35S15}

\keywords{Nonharmonic analysis, Fourier series, global hypoellipticity, global solvability, normal form}

\begin{abstract}
In this paper, we investigate the global properties of Fourier multipliers in the setting of nonharmonic analysis of boundary value problems. We give necessary and sufficient conditions for a Fourier multiplier to be globally hypoelliptic and also to be globally solvable. As an application, we consider operators on $[0,1]^2$ with non-periodic boundary conditions and we obtain results that extend what is already known in the periodic case.
\end{abstract}

\maketitle

\section{Introduction}\label{sect: int}
In this paper, we are interested in the study of global properties of some boundary value problems in $\mathbb{R}^n$. Given an operator $P$ with fixed boundary conditions in a domain $\Omega \subset \mathbb{R}^n$, we want to characterize the existence as well the regularity of the solutions for the equation $Pw=f$ and to do this we will use the nonharmonic analysis of boundary value problems developed by Ruzhansky and Tokmagambetov in \cite{RT16}. This analysis is based in terms on the eigenfunctions of a model operator $L$ with the same boundary conditions in $\Omega$. This operator $L$ does not have to be either self-adjoint or elliptic and because of this, we will also work with its adjoint $L^*$ to obtain a biorthogonal system, different from the usual approach of this kind of problem. For instance, in \cite{DR18b, KM20, KMR19b} the model is obtained by a self-adjoint, elliptic and positive pseudo-differential operator. 

The study of global hypoellipticity and global solvability has been widely studied in recent years, especially in the torus $\mathbb{T}^n$ and more recently in general compact Lie groups as seen in \cite{AGKM18, Ber99, BerCorMal93, BCP04, CC00, GPY92, GW72, GW73a, GW73b, Hou79, Hou82, KMR19b, Pet11}. The Fourier analysis is present in most of the studies that deal with the question of global properties and because of this, the nonharmonic analysis of boundary value problems fits in our purpose. 

Precisely, we obtain necessary and sufficient conditions for the global properties of operators that are Fourier multipliers in the context of the model operator $L$. As an application, we study the global properties of first order operators with constant coefficients on $[0,1]^2$, where the boundary conditions are not necessarily the periodic one as in the two-dimensional torus $\mathbb{T}^2$. When the boundary condition is the periodicity we recover the classical result of Greenfield and Wallach in \cite{GW72} that relates the global hypoellipticity with Liouville numbers. In the non-periodic setting, we obtain a larger class of globally hypoelliptic operators. Still in this model, we develop the partial Fourier analysis, similarly as \cite{KMR19}, and we recover the fact that for a class of first order operator with variable coefficients, its global properties are related to the global properties of a first order operator with constant coefficient, as seen in \cite{KMR19, KMR19b}.

The paper is organized as follows. In Section \ref{nonharmonic} we present the nonharmonic analysis of boundary value problems and the classes of operators that we will study throughout the work. In Section \ref{sect: global properties} we give necessary and sufficient conditions for a Fourier multiplier to be global hypoelliptic and globally solvable. In Section \ref{sect: example} we investigate global properties in the model given by the Laplacian in $[0,1]^2$ with non necessarily periodic boundary conditions. We see that a constant coefficient first order operator fails to be global hypoelliptic only in a very specific case, which recovers the result already known in the torus $\mathbb{T}^2$. Moreover, we develop the partial Fourier analysis in $[0,1]^2$ to obtain global properties for a class of variable coefficient operator.

\section{Nonharmonic analysis of boundary value problems}\label{nonharmonic}
	
	In this section, we introduce most of the notations and preliminary results necessary for the development of this study. A very careful presentation of these concepts and the demonstration of all the results presented here can be found in \cite{RT16} and references therein. 

Let $\Omega \subset \mathbb{R}^n$ be a bounded open set and $L$ be a differential operator of order $m$ with smooth coefficients in $\Omega$, equipped with some linear bounded conditions (BC). Here, linear bounded conditions mean that the space of functions satisfying the boundary conditions is linear. Assume that $L$ has a discrete spectrum $\{ \lambda_\xi \in \mathbb{C}; \xi \in \mathcal{I}\}$, where $\mathcal{I}$ is a countable set, and $|\lambda_\xi| \to \infty$ when $|\xi| \to \infty$. We can think, without loss of generality, that $\mathcal{I}$ is a subset of $\mathbb{Z}^K$, for some $K\geq 1$. We order the eigenvalues in the ascending order
$$
|j| \leq |k| \implies |\lambda_j| \leq |\lambda_k|.
$$

Let us denote the eigenfunctions of $L$ with respect to $\lambda_\xi$ by $u_\xi$, that is,
$$
Lu_\xi = \lambda_\xi u_\xi, \quad \xi \in \mathcal{I}.
$$
Here, the functions $u_\xi$ satisfy the boundary condition (BC). The conjugate spectral problem of $L$ is
$$
L^*v_\xi = \overline{\lambda_\xi} v_\xi, \quad \xi \in \mathcal{I},
$$
equipped with the conjugate bounded condition that we will denote by (BC)*. In general, the operator $L$ does not have to be self-adjoint, neither elliptic. Throughout the text, we will obtain estimates involving the weight
$$
\jp{\xi}:= (1+|\lambda_\xi|^2)^{\frac{1}{2m}}.
$$

We can take biorthogonal systems $\{u_\xi\}_{\xi \in \mathcal{I}}$ and $\{v_\xi\}_{\xi \in \mathcal{I}}$, with $\|u_\xi\|_{L^2(\Omega)}=\|v_\xi\|_{L^2(\Omega)}=1$, for all $\xi \in \mathcal{I}$, that is,
$$
(u_\xi,v_\eta)_{L^2(\Omega)} = \int_\Omega u_\xi(x)\overline{v_\eta(x)} \, \mathrm{d} x = \left\{\begin{array}{ll}
	1,&\text{ if } \xi=\eta, \\
	0,& \text{ if } \xi \neq \eta.
\end{array} \right.
$$

We assume that $\{u_\xi;\ \xi \in \mathcal{I} \}$ is a basis for $L^2(\Omega)$, which implies that $\{v_\xi;\ \xi \in \mathcal{I}\}$ is also a basis for $L^2(\Omega)$ (see \cite{Bar51}).

The space $H^\infty_L(\overline{\Omega})$ is called the space of test functions for $L$ and is given by
$$
H^\infty_L(\overline{\Omega}):=\bigcap_{k=1}^\infty \operatorname{Dom}(L^k),
$$
where $\operatorname{Dom}\left(L^{k}\right):=\left\{f \in L^{2}(\Omega): L^{j} f \in \operatorname{Dom}(L), j=0,1,2, \ldots, k-1\right\},$ and $L^k$ is equipped with the same boundary condition (BC), for all $k \in \mathbb{N}$. The Fréchet topology of $H^\infty_L(\overline{\Omega})$ is given by the family of semi-norms
$$
\|\varphi\|_{H^k_L}:= \max_{j\leq k} \| L^j \varphi\|_{L^2(\Omega)}, \quad k \in \mathbb{N}_0, \varphi \in H^\infty_{L}(\overline{\Omega}).
$$
	
	Similarly we define the space  $H^\infty_{L^*}(\overline{\Omega})$. Notice that we have $u_\xi \in H^\infty_{L}(\overline{\Omega})$ and $v_\xi \in H^\infty_{L^*}(\overline{\Omega})$, for all $\xi \in \mathcal{I}$, which implies that both spaces $H^\infty_{L}(\overline{\Omega})$ and $H^\infty_{L^*}(\overline{\Omega})$ are dense in $L^2(\Omega)$ because we are assuming that  $\{u_\xi;\ \xi \in \mathcal{I} \}$ is a basis for $L^2(\Omega)$.
	
	For $f \in  H^\infty_{L}(\overline{\Omega})$ and $g \in  H^\infty_{L^*}(\overline{\Omega})$ we have
	$$
	(Lf,g)_{L^2(\Omega)} = (f,L^*g)_{L^2(\Omega)}.
	$$
	
	The space $H^{-\infty}_L(\Omega):=\mathcal{L}(H^\infty_{L^*}(\overline{\Omega}),\mathbb{C})$ of linear continuous functionals on $H^\infty_{L^*}(\overline{\Omega})$ is called the space of $L$-distributions. For $w \in  H^{-\infty}_{L}({\Omega})$  and $\varphi \in  H^\infty_{L^*}(\overline{\Omega})$ we write $w(\varphi) = \jp{w,\varphi}$. For any $\psi \in  H^\infty_{L}(\overline{\Omega})$, we can define
	$$
	\jp{\psi,\varphi}:=\int_\Omega \psi(x)\varphi(x)\mathrm{d}x \quad \varphi \in  H^\infty_{L^*}(\overline{\Omega}),
	$$
	which give us an embedding $ H^\infty_{L}(\overline{\Omega}) \hookrightarrow  H^{-\infty}_{L}({\Omega})$. Analogously we define the space $H^{-\infty}_{L^*}(\Omega):=\mathcal{L}(H^\infty_{L}(\overline{\Omega}),\mathbb{C})$  of $L^*$-distributions.
	
	We have the following characterization of $L$--distributions.
	\begin{proposition}
		A linear functional $w$ on $H^{\infty}_{L^*}(\overline{\Omega})$ belongs to $H^{-\infty}_L(\Omega)$ if and only if there exist $C>0$ and $k\in \mathbb{N}$ such that
		$$
		|\jp{w,\varphi}| \leq C\|\varphi\|_{H^k_{L^*}},
		$$
		for all $\varphi \in H^{\infty}_{L^*}(\overline{\Omega})$.
	\end{proposition}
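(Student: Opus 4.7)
The plan is to recognize this as the standard characterization of continuous linear functionals on the Fréchet space $H^\infty_{L^*}(\overline{\Omega})$, whose topology is generated by the countable, increasing (in $k$) family of seminorms $\|\cdot\|_{H^k_{L^*}}$. Since $\|\cdot\|_{H^0_{L^*}}=\|\cdot\|_{L^2(\Omega)}$ is already a norm, the family is separating, so the topology is Hausdorff and metrizable, and I would first record that $H^\infty_{L^*}(\overline{\Omega})$ is indeed a Fréchet space (the completeness being built into the countable intersection $\bigcap_k \operatorname{Dom}((L^*)^k)$ of Hilbert-graph-norm domains).

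For the sufficiency ($\Leftarrow$), I would argue directly: if $|\jp{w,\varphi}|\le C\|\varphi\|_{H^k_{L^*}}$ for all $\varphi\in H^\infty_{L^*}(\overline{\Omega})$, then $w$ is continuous at $0$ with respect to the seminorm $\|\cdot\|_{H^k_{L^*}}$, and continuity at $0$ of a linear map between topological vector spaces implies global continuity, so $w \in \mathcal{L}(H^\infty_{L^*}(\overline{\Omega}),\mathbb{C}) = H^{-\infty}_L(\Omega)$.

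For the necessity ($\Rightarrow$), I would invoke the standard argument for continuous functionals on a space with countable family of seminorms. Assume $w\colon H^\infty_{L^*}(\overline{\Omega})\to\mathbb{C}$ is linear and continuous. Then $w^{-1}(\{z\in\mathbb{C}:|z|<1\})$ is an open neighborhood of $0$ in $H^\infty_{L^*}(\overline{\Omega})$, so by the definition of the Fréchet topology it must contain a basic open neighborhood of the form
$$
\mathcal{U}:=\bigl\{\varphi\in H^\infty_{L^*}(\overline{\Omega}):\|\varphi\|_{H^k_{L^*}}<\delta\bigr\}
$$
for some $k\in\mathbb{N}$ and some $\delta>0$, where I have used that the seminorms are increasing in $k$ so that a finite intersection of basic neighborhoods reduces to a single one. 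A standard homogeneity/scaling argument then yields $|\jp{w,\varphi}|\le \delta^{-1}\|\varphi\|_{H^k_{L^*}}$ for every $\varphi \in H^\infty_{L^*}(\overline{\Omega})$ (first for $\varphi$ with $\|\varphi\|_{H^k_{L^*}}>0$ by rescaling into $\mathcal{U}$, then trivially when the seminorm vanishes, using $\|t\varphi\|_{H^k_{L^*}}\to 0$ to conclude $\jp{w,\varphi}=0$), which is the desired estimate with $C=\delta^{-1}$.

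I do not expect any genuine obstacle: the only thing to take care of is confirming that the Fréchet-space machinery applies, i.e.\ that the topology is defined by a countable increasing family of seminorms and that the chosen basic neighborhood really lies in $\mathcal{U}$; the homogeneity step is routine.
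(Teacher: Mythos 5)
Your argument is correct: this is precisely the standard characterization of continuous linear functionals on a Fr\'echet space whose topology is given by a countable increasing family of seminorms, and both directions (continuity at $0$ from the seminorm bound; extraction of a single basic neighborhood plus the homogeneity/rescaling step, including the degenerate case where the seminorm vanishes) are handled properly. The paper itself does not prove this proposition, deferring to the reference [RT16], but the proof given there is the same routine functional-analytic argument, so there is nothing to flag; one could only remark that completeness of $H^{\infty}_{L^*}(\overline{\Omega})$, which you take care to record, is not actually needed for this equivalence.
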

	From now on, we will assume the following additional property to ensure that strongly convergent series preserve the boundary conditions:
	\begin{equation}\tag{BC+}\label{BC+}
\textrm{With $L_0$ denoting $L$ or $L^*$, if $f_j \in H^\infty_{L_0}(\overline{\Omega})$ satisfies $f_j \to f$ in $H^\infty_{L_0}(\overline{\Omega})$, then $f \in H^\infty_{L_0}(\overline{\Omega})$.}
\end{equation}

	Let $\mathcal{S}(\mathcal{I})$ be the space of rapidly decaying functions $\varphi:\mathcal{I}\to \mathbb{C}$, that is, $\varphi \in \mathcal{S}(\mathcal{I})$ if for any $M>0$ there exists a constant $C_M>0$ such that
	$$
	|\varphi(\xi)|\leq C_M\jp{\xi}^{-M}.
	$$
	
	Let $\mathcal{S}'(\mathcal{I})$ be the space of moderate growth functions $\varphi:\mathcal{I}\to \mathbb{C}$, that is, $\varphi \in \mathcal{S}'(\mathcal{I})$ if there exist constants $C,M>0$ such that
	$$
	|\varphi(\xi)|\leq C\jp{\xi}^{M}.
	$$
	We define the $L$-Fourier transform at $\xi \in \mathcal{I}$ by
	\begin{equation}\label{Fourier-def}
		\widehat{f}(\xi):= (f,v_\xi)_{L^2(\Omega)}=\int_\Omega f(x)\overline{v_\xi(x)} \mathrm{d}x.
	\end{equation}
	
	If $w$ is an $L$--distribution, we define its $L$--Fourier transform at $\xi \in \mathcal{I}$ by
	$$
	\widehat{w}(\xi):= \jp{w,\overline{v_\xi}}.
	$$
	Notice that this definition agrees with \eqref{Fourier-def} when $w$ is induced by a test function.
	
	Similarly, we define the $L^*$-Fourier transform by
\begin{equation}\label{Fourier*-def}
	\widehat{f}_*(\xi):= (f,u_\xi)_{L^2(\Omega)}=\int_\Omega f(x)\overline{u_\xi(x)} \mathrm{d}x.
\end{equation}

If $w$ is an $L^*$--distribution, we define its $L^*$--Fourier transform by
$$
\widehat{w}_*(\xi):= \jp{w,\overline{u_\xi}}.
$$

The $L$-Fourier transform is a bijective homeomorphism from $H^\infty_L(\overline{\Omega})$ to $\mathcal{S}(\mathcal{I})$ and from $H^{-\infty}_L({\Omega})$ to $\mathcal{S}'(\mathcal{I})$. The Fourier inverse formula for $f \in H^\infty_{L}(\overline{\Omega})$ is given by
$$
f(x)= \sum_{\xi \in \mathcal{I}} \widehat{f}(\xi)u_\xi(x).
$$
The $L^*$-Fourier transform is a bijective homeomorphism from $H^\infty_{L^*}(\overline{\Omega})$ to $\mathcal{S}(\mathcal{I})$ and from $H^{-\infty}_{L^*}({\Omega})$ to $\mathcal{S}'(\mathcal{I})$. The Fourier inversion formula for $f \in H^\infty_{L^*}(\overline{\Omega})$ is given by
$$
f(x)= \sum_{\xi \in \mathcal{I}} \widehat{f}_*(\xi)v_\xi(x).
$$

Hence, for $w \in H^{-\infty}_L(\Omega)$ and $\varphi \in H^\infty_{L^*}(\overline{\Omega})$ we have
$$
\jp{w,\varphi} = \jp{w, \overline{\overline{\varphi}}} = \jp{w, \overline{\sum_{\xi \in \mathcal{I}} \widehat{\overline{\varphi}}_*(\xi)v_\xi}} = \sum_{\xi \in \mathcal{I}} \overline{\widehat{\overline{\varphi}}_*(\xi)} \jp{w,\overline{v_\xi}} =  \sum_{\xi \in \mathcal{I}} \widehat{w}(\xi)\overline{\widehat{\overline{\varphi}}_*(\xi)}.
$$
The Fourier inversion formula for $w \in H^{-\infty}_L(\Omega)$ is given by
$$
w = \sum_{\xi \in \mathcal{I}} \widehat{w}(\xi)u_\xi.
$$
Precisely, for $\varphi \in H^\infty_{L^*}(\overline{\Omega})$ we have
$$
\jp{\sum_{\xi \in \mathcal{I}} \widehat{w}(\xi)u_\xi, \varphi}:= \sum_{\xi \in \mathcal{I}} \widehat{w}(\xi)\overline{\widehat{\overline{\varphi}}_*(\xi)}.
$$

Similarly, the Fourier inversion formula for  $w \in H^{-\infty}_{L^*}(\Omega)$ is given by
$$
w = \sum_{\xi \in \mathcal{I}} \widehat{w}_*(\xi)v_\xi.
$$

Although we do not have the Plancherel identity in this setting, we have the following result from Bari \cite{Bar51} that compares the $L^2$ norm and the sums of squares of Fourier coefficients.
\begin{lemma}\label{plancherel}
	There exist constants $k,K,m,M>0$ such that for every $f \in L^2(\Omega)$ we have
	$$
	m^{2}\|f\|_{L^{2}(\Omega)}^{2} \leq \sum_{\xi \in \mathcal{I}}|\widehat{f}(\xi)|^{2} \leq M^{2}\|f\|_{L^{2}(\Omega)}^{2},
	$$
	and
	$$
	k^{2}\|f\|_{L^{2}(\Omega)}^{2} \leq \sum_{\xi \in \mathcal{I}}|\widehat{f}_*(\xi)|^{2} \leq K^{2}\|f\|_{L^{2}(\Omega)}^{2}.
	$$
\end{lemma}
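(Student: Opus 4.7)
The result is Bari's theorem on biorthogonal Riesz bases from \cite{Bar51}, so the plan is to reduce the statement to Bari's framework and appeal to it. The hypotheses assembled in the preceding paragraphs are precisely what is required: $\{u_\xi\}$ and $\{v_\xi\}$ are $L^2$-normalized, biorthogonal, and both are Schauder bases of the separable Hilbert space $L^2(\Omega)$. Bari's theorem concludes that such a pair is in fact a Riesz basis, i.e.\ equivalent to an orthonormal basis via a bounded invertible operator on $L^2(\Omega)$. Equivalence to an orthonormal basis is exactly the two-sided bound between $\|f\|_{L^2(\Omega)}^2$ and $\sum_\xi |\widehat{f}(\xi)|^2$ claimed in the lemma, and symmetrically for the $L^*$-coefficients.

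If I were to reprove it, I would proceed in two stages. First, the basis property of $\{u_\xi\}$ together with biorthogonality identifies the coefficients in the expansion $f = \sum_\xi \widehat{f}(\xi)u_\xi$, and the partial-sum operators $S_N f := \sum_{|\xi|\le N}\widehat{f}(\xi)u_\xi$ converge pointwise on $L^2(\Omega)$. By the Banach--Steinhaus theorem the basis constant $C := \sup_N \|S_N\|_{L^2\to L^2}$ is finite, and the analogous uniform bound holds for the partial sums relative to $\{v_\xi\}$. Second, for a finite $J\subset\mathcal{I}$ and coefficients $(a_\xi)_{\xi\in J}$, duality gives
\[
\Bigl|\sum_{\xi\in J}\overline{a_\xi}\,\widehat{f}(\xi)\Bigr| = \Bigl|\bigl(f,\,\textstyle\sum_{\xi\in J}a_\xi v_\xi\bigr)_{L^2(\Omega)}\Bigr| \leq \|f\|_{L^2(\Omega)}\Bigl\|\textstyle\sum_{\xi\in J}a_\xi v_\xi\Bigr\|_{L^2(\Omega)},
\]
so taking the supremum over unit vectors in $\ell^2(J)$ reduces the upper bound $\sum|\widehat{f}(\xi)|^2\le M^2\|f\|^2$ to the synthesis estimate $\|\sum a_\xi v_\xi\|_{L^2}\le K\|a\|_{\ell^2}$ for the dual system, and the analogous duality turns upper bounds for $\{v_\xi\}$ into lower bounds for $\{u_\xi\}$.

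The main obstacle, and the technical heart of Bari's argument, is deriving the synthesis estimate from the uniform boundedness of partial sums for the two systems simultaneously. Concretely, one must show that a normalized basis whose biorthogonal dual is also a basis admits a bounded square-function characterization; this is done in \cite{Bar51} by factoring through a common orthonormal basis and using the two basis constants to bound the factors in both directions. Since the subsequent global-properties analysis only uses the two-sided norm equivalence as a black box, I would simply cite Bari's theorem rather than reproduce the factorization argument.
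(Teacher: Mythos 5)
The paper offers no proof of this lemma at all: it is stated as a result quoted directly from Bari \cite{Bar51}, and your proposal ultimately does the same thing, deferring the two-sided bounds to Bari's theorem on biorthogonal bases. Your supplementary sketch (Banach--Steinhaus for the partial-sum projections, then the duality exchanging analysis bounds for one system with synthesis bounds for its biorthogonal partner) is a fair outline of what that citation conceals, so the two treatments are essentially identical.
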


\begin{definition}\label{def-multiplier}
	Let $A:H^\infty_{L}(\overline{\Omega}) \to H^\infty_{L}(\overline{\Omega})$ be a continuous linear operator. We say that $A$ is an $L$-Fourier multiplier if it satisfies
	$$
	\widehat{Af}(\xi) = \sigma_A(\xi)\widehat{f}(\xi), \quad f \in H^\infty_{L}(\overline{\Omega}),
	$$
	for some $\sigma_A: \mathcal{I} \to \mathbb{C}$. We call the functions $\sigma_A$ the symbol of the operator $A$. Analogously, we say that a continuous linear operator $B:H^\infty_{L^*}(\overline{\Omega})\to H^\infty_{L^*}(\overline{\Omega})$  is an $L^*$--Fourier multiplier if it satisfies
	$$
	\widehat{Bg}_*(\xi) = \tau_B(\xi)\widehat{g}_*(\xi), \quad g \in H^\infty_{L^*}(\overline{\Omega}),
	$$
	for some $\tau_B: \mathcal{I} \to \mathbb{C}$. 
\end{definition}
Hence, for an $L$--Fourier multiplier $A: H^\infty_{L}(\overline{\Omega}) \to H^\infty_{L}(\overline{\Omega})$ we have
$$
Af(x)=\sum_{\xi \in \mathcal{I}} \sigma_A(\xi)
\widehat{f}(\xi) u_\xi(x), \quad f \in H^\infty_{L}(\overline{\Omega}),
$$
and for an $L^*$--Fourier multiplier $B: H^\infty_{L^*}(\overline{\Omega}) \to H^\infty_{L^*}(\overline{\Omega})$ we have
$$
Bg(x)=\sum_{\xi \in \mathcal{I}} \tau_B(\xi)
\widehat{g}_*(\xi) v_\xi(x), \quad g \in H^\infty_{L^*}(\overline{\Omega}).
$$
Notice that in this case we have 
\begin{equation}\label{symbol-eigen}
	Au_\xi = \sigma_A(\xi)u_\xi \text{ and } Bv_\xi = \tau_B(\xi)v_\xi,
\end{equation}
for all $\xi \in \mathcal{I}$, that is, if $A$ is an $L$--Fourier multiplier, then $u_\xi$ is an eigenfunction of $A$ with respect to the eigenvalue $\sigma_A(\xi)$ and if  $B$ is an $L^*$--Fourier multiplier, then $v_\xi$ is an eigenfunction of $B$ with respect to the eigenvalue $\tau_B(\xi)$, for all $\xi \in \mathcal{I}$.

We have the following relation between the symbols of an operator and its adjoint.
\begin{proposition}\label{symbol-adjoint}
	The operator $A:H^\infty_{L}(\overline{\Omega}) \to H^\infty_{L}(\overline{\Omega})$ is an $L$--Fourier multiplier with symbol $\sigma_A$ if and only if $A^*:H^\infty_{L^*}(\overline{\Omega}) \to H^\infty_{L^*}(\overline{\Omega})$ is an $L^*$--Fourier multiplier with symbol $\overline{\sigma_A}$.
\end{proposition}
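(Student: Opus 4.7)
The plan is to use the eigenvalue characterization \eqref{symbol-eigen} together with biorthogonality and the adjoint relation $(Af,g)_{L^2(\Omega)}=(f,A^*g)_{L^2(\Omega)}$ valid for $f\in H^\infty_L(\overline{\Omega})$ and $g\in H^\infty_{L^*}(\overline{\Omega})$. Recall that, by \eqref{symbol-eigen}, showing $A$ is an $L$-Fourier multiplier with symbol $\sigma_A$ is equivalent to $Au_\xi=\sigma_A(\xi)u_\xi$ for all $\xi$, and similarly $A^*$ is an $L^*$-Fourier multiplier with symbol $\tau$ iff $A^*v_\eta=\tau(\eta)v_\eta$. So the task reduces to identifying the action of $A^*$ on the basis elements $v_\eta$.

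Assuming $A$ is an $L$-Fourier multiplier with symbol $\sigma_A$, I would fix $\eta\in\mathcal{I}$ and compute the $L^*$-Fourier coefficients of $A^*v_\eta$. Using biorthogonality (which yields both $(u_\xi,v_\eta)_{L^2(\Omega)}=\delta_{\xi\eta}$ and its conjugate $(v_\eta,u_\xi)_{L^2(\Omega)}=\delta_{\xi\eta}$), one gets
\[
\widehat{A^*v_\eta}_*(\xi)=(A^*v_\eta,u_\xi)_{L^2(\Omega)}=(v_\eta,Au_\xi)_{L^2(\Omega)}=\overline{\sigma_A(\xi)}\,(v_\eta,u_\xi)_{L^2(\Omega)}=\overline{\sigma_A(\eta)}\,\delta_{\xi\eta}.
\]
Inserting this into the Fourier inversion formula for $L^*$ gives $A^*v_\eta=\overline{\sigma_A(\eta)}v_\eta$, which by \eqref{symbol-eigen} shows that $A^*$ is an $L^*$-Fourier multiplier with symbol $\overline{\sigma_A}$. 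To extend this diagonal action to all of $H^\infty_{L^*}(\overline{\Omega})$ (and thus verify the multiplier identity in Definition~\ref{def-multiplier} on general $g$), I would expand $g=\sum_\eta \widehat{g}_*(\eta)v_\eta$, apply $A^*$ term by term using its continuity on $H^\infty_{L^*}(\overline{\Omega})$, and read off $\widehat{A^*g}_*(\eta)=\overline{\sigma_A(\eta)}\widehat{g}_*(\eta)$.

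The converse direction is obtained by symmetry: if $A^*$ is an $L^*$-Fourier multiplier with symbol $\overline{\sigma_A}$, the same computation applied to $(A^*)^*=A$ (acting on $u_\xi$ instead of $v_\eta$) yields $Au_\xi=\sigma_A(\xi)u_\xi$, so $A$ is an $L$-Fourier multiplier with symbol $\sigma_A$.

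The only minor obstacle is bookkeeping of complex conjugates arising from the non-symmetric $L^2$ inner product, and checking that the termwise passage $A^*g=\sum_\eta \widehat{g}_*(\eta)A^*v_\eta$ is legitimate; this is settled by the continuity of $A^*$ on $H^\infty_{L^*}(\overline{\Omega})$ together with the convergence of the $L^*$-Fourier series of $g$ in that Fréchet space. No calculation beyond the one displayed above is needed.
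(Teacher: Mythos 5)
Your argument is correct. Note that the paper itself does not prove Proposition~\ref{symbol-adjoint} (it is among the preliminaries deferred to \cite{RT16}), so there is no in-text proof to compare against; your computation is the standard one and all the conjugations check out: $(A^*v_\eta,u_\xi)=(v_\eta,Au_\xi)=\overline{\sigma_A(\xi)}(v_\eta,u_\xi)=\overline{\sigma_A(\eta)}\delta_{\xi\eta}$, and the converse follows by symmetry from $(A^*)^*=A$. Two small remarks. First, you can shortcut the detour through the basis vectors and the termwise series expansion: for arbitrary $g\in H^\infty_{L^*}(\overline{\Omega})$ one has directly
\[
\widehat{A^*g}_*(\xi)=(A^*g,u_\xi)_{L^2(\Omega)}=(g,Au_\xi)_{L^2(\Omega)}=\overline{\sigma_A(\xi)}\,(g,u_\xi)_{L^2(\Omega)}=\overline{\sigma_A(\xi)}\,\widehat{g}_*(\xi),
\]
which verifies Definition~\ref{def-multiplier} for $A^*$ in one line and makes the convergence discussion unnecessary. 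Second, be a little careful with the phrase ``by \eqref{symbol-eigen} \dots is equivalent to'': the paper only records the implication from the multiplier property to the eigenfunction relation $Au_\xi=\sigma_A(\xi)u_\xi$; the reverse implication is exactly the continuity-plus-Fourier-inversion argument you supply afterwards, so it is proved rather than quoted. Neither point is a gap; the proof stands as written.
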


Let $A:H^\infty_{L}(\overline{\Omega}) \to H^\infty_{L}(\overline{\Omega})$ be an $L$--Fourier multiplier. We can extend $A$ to $L$-distributions in the following way: for $w \in H^{-\infty}_L(\Omega)$ and $\varphi \in H^\infty_{L^*}(\overline{\Omega})$, define
$$
\jp{Aw,\varphi}:= \jp{w,\overline{A^*\overline{\varphi}}}.
$$
Clearly, $Aw$ is linear and the continuity follows from the continuity of $A^*$. We will still denote by $A$ its extension to $L$--distributions. 
\begin{proposition}
	Let $A$ be an $L$--Fourier multiplier. For $w \in H^{-\infty}_L({\Omega})$ we still have
	$$
	\widehat{Aw}(\xi)=\sigma_A(\xi)\widehat{w}(\xi),
	$$
	for all $\xi \in \mathcal{I}$.
\end{proposition}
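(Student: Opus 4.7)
The plan is to unwind the definition of the extension $Aw$ at the test function $\overline{v_\xi}$ and then exploit the fact that $v_\xi$ is an eigenfunction of $A^*$. Concretely, by the definition of the $L$-Fourier transform of a distribution and the definition of the extension of $A$, I would write
\[
\widehat{Aw}(\xi) \;=\; \jp{Aw,\overline{v_\xi}} \;=\; \jp{w,\overline{A^*\overline{\overline{v_\xi}}}} \;=\; \jp{w,\overline{A^* v_\xi}}.
\]
Here I only use the defining formula $\jp{Aw,\varphi}=\jp{w,\overline{A^*\overline{\varphi}}}$ with $\varphi=\overline{v_\xi}\in H^\infty_{L^*}(\overline{\Omega})$, so this step is purely syntactic.

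Next I would invoke Proposition \ref{symbol-adjoint} together with the eigenfunction identity \eqref{symbol-eigen}. Since $A$ is an $L$-Fourier multiplier with symbol $\sigma_A$, its adjoint $A^*$ is an $L^*$-Fourier multiplier with symbol $\tau_{A^*}=\overline{\sigma_A}$, and by \eqref{symbol-eigen} applied to $A^*$ we get $A^* v_\xi = \overline{\sigma_A(\xi)}\, v_\xi$ for every $\xi\in\mathcal{I}$. Taking complex conjugates gives $\overline{A^* v_\xi}=\sigma_A(\xi)\,\overline{v_\xi}$.

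Substituting back yields
\[
\widehat{Aw}(\xi) \;=\; \jp{w,\sigma_A(\xi)\,\overline{v_\xi}} \;=\; \sigma_A(\xi)\,\jp{w,\overline{v_\xi}} \;=\; \sigma_A(\xi)\,\widehat{w}(\xi),
\]
where in the middle step the scalar $\sigma_A(\xi)$ comes out of the pairing because $\jp{\,\cdot\,,\,\cdot\,}$ is the duality pairing (linear in the second slot as used here). This completes the verification for every $\xi\in\mathcal{I}$.

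There is essentially no obstacle: the only potential subtlety is keeping track of the conjugations, since the $L$-pairing is antilinear on one side depending on convention, so I would be careful that the identity $\jp{w,c\psi}=c\jp{w,\psi}$ is the one consistent with the pairing used in the paper (the linear pairing without complex conjugation, as the author writes $\jp{w,\overline{v_\xi}}$ rather than $(w,v_\xi)$). Once that bookkeeping is settled, the proof is a one-line consequence of Proposition \ref{symbol-adjoint} and the eigenfunction relation \eqref{symbol-eigen}.
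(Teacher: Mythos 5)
Your proof is correct and follows essentially the same route as the paper's: unwind the definition of the extension at $\overline{v_\xi}$, use Proposition \ref{symbol-adjoint} and the eigenfunction relation \eqref{symbol-eigen} to get $A^*v_\xi=\overline{\sigma_A(\xi)}v_\xi$, and pull the scalar out of the pairing. The only cosmetic difference is that you conjugate $A^*v_\xi$ before extracting the scalar, whereas the paper extracts $\overline{\sigma_{A^*}(\xi)}$ first and identifies it with $\sigma_A(\xi)$ at the end.
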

\begin{proof}
	For $w \in H^{-\infty}({\Omega})$  and $\xi \in \mathcal{I}$ we have $\widehat{w}(\xi) = \jp{w, \overline{v_{\xi}}}$. Hence,
	\begin{align*}
		\widehat{Aw}(\xi) = \jp{Aw,\overline{v_\xi}} = \jp{w, \overline{A^*{v_\xi}}}.
	\end{align*}
	By \eqref{symbol-eigen} we have
	$$
	\jp{w, \overline{A^*v_\xi}} = \jp{w, \overline{\sigma_{A^*}(\xi)v_\xi}} = \overline{\sigma_{A^*}} \jp{w,\overline{v_\xi}} = \overline{\sigma_{A^*}} \widehat{w}(\xi).
	$$
	By Proposition \ref{symbol-adjoint}, we have $\sigma_{A^*} = \overline{\sigma_A}$. Therefore,
	$$
	\widehat{Aw}(\xi) =\sigma_A(\xi)\widehat{w}(\xi),
	$$
	for all $\xi \in \mathcal{I}$.
\end{proof}

\section{Global $L$--Properties}\label{sect: global properties}

In this section we will give necessary and sufficient conditions for a Fourier $L$--multiplier be globally hypoelliptic and globally solvable in the context of the nonharmonic analysis of boundary value problems. First, let us define precisely the meaning of global hypoellipticity in this setting.

\begin{definition}
	We say that an operator $P:H^{-\infty}_L(\Omega) \to H^{-\infty}_L(\Omega)$ is globally $L$-hypoelliptic if the conditions $w\in H^{-\infty}_L(\Omega)$ and $Pw \in H^\infty_L(\overline{\Omega})$ imply that $w\in H^\infty_L(\overline{\Omega})$.
\end{definition}

\begin{theorem}\label{GLH}
	Let $P$ be an $L$-Fourier multiplier with symbol $\sigma_P$. Then $P$ is globally $L$--hypoelliptic if and only if there exists $M>0$ such that
	$$
	\jp{\xi} \geq M \implies |\sigma_P(\xi)| > \jp{\xi}^{-M}.
	$$
\end{theorem}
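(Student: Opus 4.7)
The plan is to translate everything into statements about Fourier coefficients via the homeomorphisms $H^\infty_L(\overline{\Omega})\cong\mathcal{S}(\mathcal{I})$ and $H^{-\infty}_L(\Omega)\cong\mathcal{S}'(\mathcal{I})$ and the identity $\widehat{Pw}(\xi)=\sigma_P(\xi)\widehat{w}(\xi)$. Under this dictionary, $P$ is globally $L$-hypoelliptic exactly when: for every $\widehat{w}\in\mathcal{S}'(\mathcal{I})$, if $\sigma_P\cdot\widehat{w}\in\mathcal{S}(\mathcal{I})$ then $\widehat{w}\in\mathcal{S}(\mathcal{I})$. Note also that because $|\lambda_\xi|\to\infty$ as $|\xi|\to\infty$, each set $\{\xi\in\mathcal{I}:\jp{\xi}<M\}$ is finite, which lets me ignore small $\xi$ whenever convenient.

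For the sufficiency direction, assume the symbol inequality holds for some $M>0$. Given $w\in H^{-\infty}_L(\Omega)$ with $Pw\in H^\infty_L(\overline{\Omega})$, the inequality forces $\sigma_P(\xi)\neq 0$ whenever $\jp{\xi}\geq M$, so I can divide and obtain
\[
|\widehat{w}(\xi)|=\frac{|\widehat{Pw}(\xi)|}{|\sigma_P(\xi)|}<\jp{\xi}^{M}|\widehat{Pw}(\xi)|.
\]
Since $\widehat{Pw}\in\mathcal{S}(\mathcal{I})$, the right-hand side decays faster than any power of $\jp{\xi}^{-1}$, so $\widehat{w}$ decays rapidly on $\{\jp{\xi}\geq M\}$; on the finite complement it is automatically bounded. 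Hence $\widehat{w}\in\mathcal{S}(\mathcal{I})$ and $w\in H^\infty_L(\overline{\Omega})$.

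For the necessity direction, I argue by contrapositive and build a counterexample. If the symbol condition fails, then for every $k\in\mathbb{N}$ there is some $\xi\in\mathcal{I}$ with $\jp{\xi}\geq k$ and $|\sigma_P(\xi)|\leq\jp{\xi}^{-k}$. Using $|\lambda_\xi|\to\infty$, I can inductively extract a sequence of distinct indices $\{\xi_k\}_{k\in\mathbb{N}}$ with $\jp{\xi_k}$ strictly increasing to infinity and $|\sigma_P(\xi_k)|\leq\jp{\xi_k}^{-k}$. Define
\[
w:=\sum_{k=1}^{\infty}u_{\xi_k},
\]
interpreted via Fourier inversion for $L$-distributions: its coefficients $\widehat{w}(\xi)$ equal $1$ at each $\xi_k$ and $0$ elsewhere, which is of moderate growth so $w\in H^{-\infty}_L(\Omega)$. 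However, $\widehat{w}$ is not rapidly decaying, so $w\notin H^\infty_L(\overline{\Omega})$. On the other hand, $\widehat{Pw}$ is supported on $\{\xi_k\}$ with $|\widehat{Pw}(\xi_k)|=|\sigma_P(\xi_k)|\leq\jp{\xi_k}^{-k}$; for any $N$, the terms with $k\geq N$ are bounded by $\jp{\xi_k}^{-N}$ while the finitely many terms with $k<N$ are absorbed into a constant, so $\widehat{Pw}\in\mathcal{S}(\mathcal{I})$ and $Pw\in H^\infty_L(\overline{\Omega})$. This contradicts global $L$-hypoellipticity.

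I do not expect a serious obstacle; the main point to execute carefully is the extraction of the sequence $\{\xi_k\}$ so that $\jp{\xi_k}\to\infty$ (to guarantee the constructed $w$ actually lies outside $H^\infty_L(\overline{\Omega})$) and the verification of rapid decay of $\widehat{Pw}$, where one must handle separately the ranges $k\geq N$ and $k<N$ when checking the decay estimate for each prescribed order $N$.
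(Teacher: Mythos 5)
Your proof is correct and follows essentially the same route as the paper: the sufficiency argument is identical, and the necessity argument uses the same counterexample, namely an $L$--distribution whose Fourier coefficients are the indicator of a suitably extracted sequence $\{\xi_k\}$. The only difference is that you merge into a single construction what the paper treats as two separate cases (symbol vanishing along a sequence versus symbol nonzero but smaller than $\jp{\xi}^{-M}$), which works because your bound $|\sigma_P(\xi_k)|\leq\jp{\xi_k}^{-k}$ covers both possibilities at once.
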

\begin{proof}
	$(\impliedby)$ Assume that $Pw \in H^\infty_L(\overline{\Omega})$, for some $w \in H^{-\infty}_L(\Omega)$. Hence, we have
	$$
	\widehat{Pw}(\xi) = \sigma_P(\xi)\widehat{w}(\xi), \quad  \xi \in \mathcal{I}.
	$$
	By hypothesis, we have that $\sigma_P(\xi) \neq 0$ for $\jp{\xi}\geq M$ and we obtain
	$$
	|\widehat{w}(\xi)| = |\sigma_P(\xi)|^{-1} |\widehat{Pw}(\xi)| \leq \jp{\xi}^M|\widehat{Pw}(\xi)|,
	$$
	for all $\jp{\xi} \geq M$. Since $Pw \in H^{\infty}_L(\overline{\Omega})$ we have that $\widehat{Pw} \in \mathcal{S}(\mathcal{I})$ by the fact that the $L$--Fourier transform is a bijective homeomorphism from  $H^{\infty}_L(\Omega)$ to $\mathcal{S}(\mathcal{I})$. Thus, given $N>0$ there exists $C_{N+M}>0$ such that $|\widehat{Pw}(\xi)| \leq C_{N+M} \jp{\xi}^{-(N+M)}$, for all $\xi \in \mathcal{I}$. Therefore, for $\jp{\xi}\geq M$ we have
	$$
	|\widehat{w}(\xi)| \leq C_{N+M}\jp{\xi}^{-N}.
	$$  
	Notice that $\jp{\xi}<M$ only for a finitely many $\xi \in \mathcal{I}$, so for every $N>0$ there exists $C'_N>0$ such that $$
	|\widehat{w}(\xi)| \leq C'_N\jp{\xi}^{-N}, \quad \forall \xi \in \mathcal{I},
	$$   
	that is, $\widehat{w} \in \mathcal{S}(\mathcal{I})$ and we conclude that $w\in H^\infty_L(\overline{\Omega})$.
	
	$(\implies)$  Suppose that there exists a sequence $\{ \xi_k\}_{k\in \mathbb{N}}$ such that $\sigma_P(\xi_k)=0$, for every $k \in \mathbb{N}$. Consider the following function in $\mathcal{S}'(\mathcal{I})$ given by
	$$
	\alpha(\xi)=
	\left\{
	\begin{array}{ll}
		1, & \text{if } \xi=\xi_k, \text{ for some } k \in \mathbb{N};\\
		0,& \text{otherwise}.\\
	\end{array}
	\right.
	$$
	Let $w\in H^{-\infty}_L(\Omega)$ such that $\widehat{w}=\alpha$. Notice that $w\notin H^\infty_L(\overline{\Omega})$ because $\alpha \notin \mathcal{S}(\mathcal{I}$). Then
	$$
	\widehat{Pw}(\xi) = \sigma_P(\xi)\widehat{w}(\xi) = \sigma_P(\xi)\alpha(\xi)=0,
	$$
	for all $\xi \in \mathcal{I}$, which implies that $Pw=0$, so the operator $P$ is not globally $L$--hypoelliptic. 
	
	Finally, assume that for all $M>0$ there exists $\xi_M \in \mathcal{I}$ such that 
	$$
	\jp{\xi_M} \geq M \text{ and } 0<|\sigma_P(\xi_M)| < \jp{\xi_M}^{-M}.
	$$
	We can assume without loss of generality that $\jp{\xi_M}\leq \jp{\xi_{N}}$ if $M \leq N$. Consider the following function in $\mathcal{S}(\mathcal{I})$ given by
	$$
	\beta(\xi)=
	\left\{
	\begin{array}{ll}
		\sigma_P(\xi), & \text{if } \xi=\xi_k, \text{ for some } k \in \mathbb{N};\\
		0,& \text{otherwise}\\
	\end{array}
	\right.
	$$
	and the following function in $\mathcal{S}'(\mathcal{I}) \setminus \mathcal{S}(\mathcal{I})$ given by
	$$
	\alpha(\xi)=
	\left\{
	\begin{array}{ll}
		1, & \text{if } \xi=\xi_k, \text{ for some } k \in \mathbb{N};\\
		0,& \text{otherwise}.\\
	\end{array}
	\right.
	$$
	Let $g \in H^\infty_L(\overline{\Omega})$ such that $\widehat{g} = \beta$ and $w \in H^{-\infty}_L(\Omega)\setminus H^\infty_L(\overline{\Omega})$ such that $\widehat{w}=\alpha$. Notice that
	$$
	\widehat{Pw}(\xi) = \sigma_P(\xi)\widehat{w}(\xi) = \sigma_P(\xi)\alpha(w) = \beta(\xi) = \widehat{g}(\xi), 
	$$ 
	for every $\xi \in \mathcal{I}$. Hence, $Pw=g \in H^\infty_L(\overline{\Omega})$, which implies that $P$ is not globally $L$--hypoelliptic.
\end{proof}

We have the analogous result for the operator $L^*$ and since the techniques for its proof are the same as the last theorem, the proof will be omitted.
\begin{definition}
	We say that an operator $P:H^{-\infty}_{L^*}(\Omega) \to H^{-\infty}_{L^*}(\Omega)$ is globally $L^*$-hypoelliptic if the conditions $w\in H^{-\infty}_{L^*}(\Omega)$ and $Pw \in H^\infty_{L^*}(\overline{\Omega})$ imply that $w\in H^\infty_{L^*}(\overline{\Omega})$.
\end{definition}
\begin{theorem}
	Let $P$ be an $L^*$-Fourier multiplier with symbol $\tau_P$. Then $P$ is globally $L^*$--hypoelliptic if and only if there exists $M>0$ such that
	$$
	\jp{\xi} \geq M \implies |\tau_P(\xi)| > \jp{\xi}^{-M}.
	$$
\end{theorem}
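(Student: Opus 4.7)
The plan is to mirror the proof of Theorem \ref{GLH} verbatim, swapping the roles of $L$ and $L^*$ throughout. The first preparatory step is to note that the extension of $P$ to $L^*$-distributions, defined for $w\in H^{-\infty}_{L^*}(\Omega)$ and $\varphi\in H^\infty_L(\overline{\Omega})$ by $\jp{Pw,\varphi}:=\jp{w,\overline{P^*\overline{\varphi}}}$, still satisfies $\widehat{Pw}_*(\xi)=\tau_P(\xi)\widehat{w}_*(\xi)$. This is the direct $L^*$-analog of the proposition immediately preceding Theorem \ref{GLH}; its proof uses the $L^*$-version of Proposition \ref{symbol-adjoint} (i.e., $P^*$ is an $L$-Fourier multiplier with symbol $\overline{\tau_P}$) together with the eigenvalue identity $Pv_\xi=\tau_P(\xi)v_\xi$ from \eqref{symbol-eigen}.

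For the sufficiency implication I would assume the estimate on $\tau_P$ and take $w\in H^{-\infty}_{L^*}(\Omega)$ with $Pw\in H^\infty_{L^*}(\overline{\Omega})$. Then $\widehat{Pw}_*\in\mathcal{S}(\mathcal{I})$ by the homeomorphism $H^\infty_{L^*}(\overline{\Omega})\cong\mathcal{S}(\mathcal{I})$, and for $\jp{\xi}\geq M$ one has $|\widehat{w}_*(\xi)|=|\tau_P(\xi)|^{-1}|\widehat{Pw}_*(\xi)|\leq \jp{\xi}^M|\widehat{Pw}_*(\xi)|$, which decays faster than any polynomial. Since only finitely many $\xi\in\mathcal{I}$ satisfy $\jp{\xi}<M$, the bound extends to all $\xi$, so $\widehat{w}_*\in\mathcal{S}(\mathcal{I})$ and $w\in H^\infty_{L^*}(\overline{\Omega})$.

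For the necessity direction I would argue by contraposition, splitting into two cases. If there exist infinitely many $\xi_k\in\mathcal{I}$ with $\tau_P(\xi_k)=0$, I take $w\in H^{-\infty}_{L^*}(\Omega)$ with $\widehat{w}_*$ equal to the characteristic function of $\{\xi_k\}$; then $w\notin H^\infty_{L^*}(\overline{\Omega})$ but $\widehat{Pw}_*\equiv 0$, so $Pw=0\in H^\infty_{L^*}(\overline{\Omega})$. Otherwise, for every $M>0$ there is some $\xi_M$ with $\jp{\xi_M}\geq M$ and $0<|\tau_P(\xi_M)|<\jp{\xi_M}^{-M}$. After passing to a subsequence with $\jp{\xi_M}$ strictly increasing, I define $\widehat{w}_*(\xi)=1$ on $\{\xi_M\}$ (and zero otherwise) and $\widehat{g}_*(\xi)=\tau_P(\xi)$ on $\{\xi_M\}$ (and zero otherwise). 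The first belongs to $\mathcal{S}'(\mathcal{I})\setminus\mathcal{S}(\mathcal{I})$ while the second lies in $\mathcal{S}(\mathcal{I})$ by the rapid decay of $|\tau_P(\xi_M)|$, and $Pw=g$ by construction, giving the desired failure of global $L^*$-hypoellipticity.

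The main obstacle is essentially bookkeeping: one must keep careful track of which transform ($\widehat{\cdot}$ versus $\widehat{\cdot}_*$) and which basis ($u_\xi$ versus $v_\xi$) is in play, and verify that the preparatory proposition about the multiplier action on distributions goes through with starred versions. No new analytic ingredient beyond those used in Theorem \ref{GLH} is required; in particular, Lemma \ref{plancherel} is not invoked, since the entire argument takes place inside the classes $\mathcal{S}(\mathcal{I})$ and $\mathcal{S}'(\mathcal{I})$.
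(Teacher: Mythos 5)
Your proposal is correct and is exactly the argument the paper intends: the paper omits this proof, stating that the techniques are the same as in Theorem \ref{GLH}, and your verbatim transcription with $\widehat{\cdot}_*$, $v_\xi$, and the starred function spaces in place of their unstarred counterparts (including the preparatory check that the distributional extension still acts by multiplication by $\tau_P$) is precisely that transcription. No discrepancy to report.
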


Given $P$ an $L$--Fourier multiplier with symbol $\sigma_P$, we have by Proposition \ref{symbol-adjoint} that $P^*$ is an $L^*$--Fourier multiplier with symbol $\overline{\sigma_P}$. Hence, we obtain the following relation between the global hypoellipticity of $P$ and $P^*$.
\begin{corollary}
	Let $P$ be an $L$--Fourier multiplier. Then $P$ is globally $L$--hypoelliptic if and only if $P^*$ is globally $L^*$--hypoelliptic.
\end{corollary}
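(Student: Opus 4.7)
The plan is to derive the corollary directly from the two characterizations of global hypoellipticity (Theorem \ref{GLH} and its $L^*$ analog) combined with the symbol identity provided by Proposition \ref{symbol-adjoint}. The key observation is that these three results together reduce the claim to a trivial equality of moduli, so no new analytic ingredient is needed.

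First I would invoke Proposition \ref{symbol-adjoint}: since $P$ is an $L$-Fourier multiplier with symbol $\sigma_P$, its adjoint $P^*$ is an $L^*$-Fourier multiplier with symbol $\tau_{P^*} = \overline{\sigma_P}$. Consequently $|\tau_{P^*}(\xi)| = |\sigma_P(\xi)|$ for every $\xi \in \mathcal{I}$.

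Next I would apply Theorem \ref{GLH} to $P$ and the $L^*$ version of the theorem to $P^*$. The first says that $P$ is globally $L$-hypoelliptic if and only if there exists $M>0$ such that $\jp{\xi} \geq M$ implies $|\sigma_P(\xi)| > \jp{\xi}^{-M}$. The second says that $P^*$ is globally $L^*$-hypoelliptic if and only if there exists $M>0$ such that $\jp{\xi} \geq M$ implies $|\tau_{P^*}(\xi)| > \jp{\xi}^{-M}$. Because $|\tau_{P^*}(\xi)| = |\sigma_P(\xi)|$, these two conditions on the symbols are literally the same statement, so the two equivalences produce the same class of operators, which is exactly what the corollary asserts.

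There is no real obstacle here: the argument is a one-line chain of equivalences once Proposition \ref{symbol-adjoint} is in hand. The only thing worth being careful about is recording explicitly that taking complex conjugates does not change the absolute value that appears in the hypoellipticity criterion; everything else is a direct citation of already-established results.
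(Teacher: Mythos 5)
Your proof is correct and follows exactly the route the paper intends: the corollary is stated immediately after the observation that $P^*$ is an $L^*$--Fourier multiplier with symbol $\overline{\sigma_P}$, so the equivalence reduces to comparing the two symbol criteria of Theorem \ref{GLH} and its $L^*$ analogue, which coincide because $|\overline{\sigma_P(\xi)}| = |\sigma_P(\xi)|$. Nothing is missing.
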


Now we will turn our attention to the study of the solvability of an $L$--Fourier multiplier $P$ with symbol $\sigma_P$. First, we need to characterize which $L$--distribution $f\in H^{-\infty}_L(\Omega)$ makes sense to solve the equation $Pw=f$. If $Pw=f$, for some $w \in H^{-\infty}_L(\Omega)$, then
$$
\widehat{f}(\xi) = \widehat{Pw}(\xi) = \sigma_P(\xi)\widehat{w}(\xi), \quad \xi \in \mathcal{I}.
$$ 
Hence, a necessary condition to solve the equation $Pw=f$ is that $\widehat{f}(\xi)=0$, whenever $\sigma_P(\xi)=0$. Let $\mathbb{E}$ be the space of $L$--distributions that satisfy this conditions, that is,
\begin{equation}\label{adm}
	\mathbb{E}:= \{f \in H^{-\infty}_L(\Omega); \  \sigma_P(\xi)=0 \implies \widehat{f}(\xi)=0 \}.
\end{equation}
We call the elements of $\mathbb{E}$ as $P$--admissible $L$--distributions. 
\begin{definition}
	Let $P$ be an $L$--Fourier multiplier. We say that $P$ is globally $L$--solvable if $PH^{-\infty}_L(\Omega)=\mathbb{E}$, that is, if for any $P$--admissible $L$--distribution $f\in \mathbb{E}$, there exists $w \in H^{-\infty}_L(\Omega)$ such that $Pw=f$.
\end{definition}

Similarly, if $P$ is an $L^*$--Fourier multiplier with symbol $\tau_P$, a necessary condition to solve the equation $Pw=f$ is that
$\widehat{f}_*(\xi)=0$ whenever $\tau_P(\xi)=0$. Hence, denoting by $\mathbb{E}_{*}$ the space of $P$--admissible $L^*$--distributions, we define the global $L^*$--solvability of $P$.
\begin{definition}
	Let $P$ be an $L^*$--Fourier multiplier. We say that $P$ is globally $L^*$--solvable if $PH^{-\infty}_{L^*}(\Omega)=\mathbb{E}_{*}$, that is, if for any $P$--admissible $L^*$--distribution $f\in \mathbb{E}_{*}$, there exists $w \in H^{-\infty}_{L^*}(\Omega)$ such that $Pw=f$.
\end{definition}
\begin{theorem}\label{GLS}
	Let $P$ be an $L$-Fourier multiplier with symbol $\sigma_P$. Then $P$ is globally $L$--solvable if and only if there exists $M>0$ such that
	$$
	|\sigma_P(\xi)| > \jp{\xi}^{-M},
	$$
	whenever $\sigma_P(\xi) \neq 0$.
\end{theorem}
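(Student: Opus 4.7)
The plan is to mirror the structure of Theorem \ref{GLH}, using the fact that the $L$-Fourier transform is a bijective homeomorphism between $H^{-\infty}_L(\Omega)$ and $\mathcal{S}'(\mathcal{I})$ and between $H^\infty_L(\overline{\Omega})$ and $\mathcal{S}(\mathcal{I})$. The crucial new ingredient compared with hypoellipticity is the admissibility condition \eqref{adm}, which lets us ignore the zeros of $\sigma_P$ since $\widehat{f}(\xi)$ vanishes there.

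For the sufficiency direction, assume that there exists $M>0$ such that $|\sigma_P(\xi)|>\jp{\xi}^{-M}$ whenever $\sigma_P(\xi)\neq 0$, and let $f\in\mathbb{E}$. I would define a candidate $\hat{w}:\mathcal{I}\to\mathbb{C}$ by
\[
\hat{w}(\xi):=\begin{cases}\widehat{f}(\xi)/\sigma_P(\xi), & \sigma_P(\xi)\neq 0,\\ 0, & \sigma_P(\xi)=0.\end{cases}
\]
Since $f\in H^{-\infty}_L(\Omega)$, we have $\widehat{f}\in\mathcal{S}'(\mathcal{I})$, so $|\widehat{f}(\xi)|\leq C\jp{\xi}^{N}$ for some $C,N>0$. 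The hypothesis gives $|\hat{w}(\xi)|\leq C\jp{\xi}^{N+M}$ on the set where $\sigma_P\neq 0$, and $\hat{w}\equiv 0$ elsewhere, so $\hat{w}\in\mathcal{S}'(\mathcal{I})$. Letting $w\in H^{-\infty}_L(\Omega)$ be the unique $L$-distribution with $\widehat{w}=\hat{w}$, the identity $\widehat{Pw}(\xi)=\sigma_P(\xi)\hat{w}(\xi)$ equals $\widehat{f}(\xi)$ on $\{\sigma_P(\xi)\neq 0\}$, and both sides vanish on $\{\sigma_P(\xi)=0\}$ by admissibility. Injectivity of the $L$-Fourier transform yields $Pw=f$.

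For the necessity direction, I would argue by contraposition: assume that for every $M>0$ there exists $\xi_M\in\mathcal{I}$ with $0<|\sigma_P(\xi_M)|\leq \jp{\xi_M}^{-M}$. Passing to a subsequence I may assume a sequence $\{\xi_k\}_{k\in\mathbb{N}}$ of distinct indices with $\jp{\xi_k}\geq 2$, $\jp{\xi_k}\to\infty$, and $0<|\sigma_P(\xi_k)|\leq \jp{\xi_k}^{-k}$. Define $\alpha:\mathcal{I}\to\mathbb{C}$ by $\alpha(\xi_k)=1$ for all $k$ and $\alpha(\xi)=0$ otherwise. Since $|\alpha|\leq 1$ we have $\alpha\in\mathcal{S}'(\mathcal{I})$, so there exists a unique $f\in H^{-\infty}_L(\Omega)$ with $\widehat{f}=\alpha$, and $\widehat{f}$ is supported on $\{\xi_k\}\subset\{\sigma_P\neq 0\}$, so $f\in\mathbb{E}$. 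If some $w\in H^{-\infty}_L(\Omega)$ solved $Pw=f$, then $\widehat{w}(\xi_k)=1/\sigma_P(\xi_k)$, giving $|\widehat{w}(\xi_k)|\geq \jp{\xi_k}^{k}$. But moderate growth of $\widehat{w}$ would yield $|\widehat{w}(\xi)|\leq C'\jp{\xi}^{N'}$ for some fixed $N'$; taking $k>N'$ and using $\jp{\xi_k}\geq 2$ contradicts this bound. Hence no such $w$ exists and $P$ is not globally $L$-solvable.

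The only step that needs genuine care is the contrapositive construction: one must extract a subsequence along which the moderate-growth bound on $\widehat{w}$ is violated by arbitrarily large margins, which is why the condition must be quantitative and uniform rather than merely pointwise. The rest is essentially bookkeeping with the correspondence between $L$-distributions and elements of $\mathcal{S}'(\mathcal{I})$.
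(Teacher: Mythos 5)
Your proof is correct and takes essentially the same route as the paper's: the identical division-by-the-symbol construction of $\widehat{w}$ for sufficiency, and the identical counterexample $\widehat{f}=\alpha$ supported on a sequence $\{\xi_k\}$ with $0<|\sigma_P(\xi_k)|\leq \jp{\xi_k}^{-k}$ for necessity. The only difference is that you make explicit the subsequence extraction (distinct indices, $\jp{\xi_k}\to\infty$) needed to conclude $\widehat{w}\notin\mathcal{S}'(\mathcal{I})$, a step the paper leaves implicit.
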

\begin{proof}
	$(\impliedby)$ Let $f\in \mathbb{E}$. Define
	$$
	\alpha(\xi)=
	\left\{
	\begin{array}{ll}
		\sigma_P(\xi)^{-1}\widehat{f}(\xi), & \text{if } \sigma_P(\xi)\neq 0;\\
		0,& \text{otherwise}.\\
	\end{array}
	\right.
	$$
	Since $f\in H^{-\infty}_L(\Omega)$, we have that $\widehat{f} \in \mathcal{S}'(\mathcal{I})$, that is, there exist $C,K>0$ such that $|\widehat{f}(\xi)| \leq C\jp{\xi}^K$, for all $\xi \in \mathcal{I}$. When $\sigma_P(\xi) \neq 0$, we have
	$$
	|\alpha(\xi)| \leq |\sigma_P(\xi)|^{-1}|\widehat{f}(\xi)| \leq C\jp{\xi}^{M+K},
	$$
	which implies that $\alpha \in \mathcal{S}'(\mathcal{I})$. Let $w \in \mathcal{H}^{-\infty}(\Omega)$ such that $\widehat{w} = \alpha$. Hence,
	$$
	\widehat{Pw}(\xi) = \sigma_P(\xi)\widehat{w}(\xi) = \sigma_P(\xi)\alpha(\xi) = \widehat{f}(\xi), \quad \xi \in \mathcal{I}.
	$$
	Therefore $Pw=f$.
	
	$(\implies)$ Assume that there exists a sequence $\{ \xi_k\}_{k\in \mathbb{N}}$ such that 
	$$
	0 < |\sigma_P(\xi_k)| < \jp{\xi_k}^{-k}.
	$$
	Define thw following function on $\mathcal{S}'(\mathcal{I})$:
	$$
	\alpha(\xi)=
	\left\{
	\begin{array}{ll}
		1, & \text{if } \xi=\xi_k, \text{ for some } k \in \mathbb{N};\\
		0,& \text{otherwise}.\\
	\end{array}
	\right.
	$$
	Let $f \in H^{-\infty}(\Omega)$ such that $\widehat{f} = \alpha$ and notice that $f \in \mathbb{E}$. If $Pw=f$ for some $w \in H^{-\infty}(\Omega)$, then
	$$
	1 = \alpha(\xi_k)=\widehat{f}(\xi_k) = \widehat{Pw}(\xi_k) = \sigma_P(\xi_k)\widehat{w}(\xi_k),
	$$
	that is,
	$$
	|\widehat{w}(\xi_k)| = |\sigma_P(\xi_k)|^{-1} > \jp{\xi_k}^k.
	$$
	However, this implies that $\widehat{w} \notin \mathcal{S}'(\mathcal{I})$. Therefore exists $M>0$ such that $
	|\sigma_P(\xi)| > \jp{\xi}^{-M},
	$
	whenever $\sigma_P(\xi) \neq 0$, and the proof is complete.
\end{proof}
Notice that by the construction of the solution of $Pw=f$ in the proof above, we have that $\alpha \in \mathcal{S}(\mathcal{I})$ if $\widehat{f} \in \mathcal{S}(\mathcal{I})$. Hence, we obtain the following corollary.
	\begin{corollary}
		Let $P$ be an $L$--Fourier multiplier. If $P$ is globally $L$--solvable, then for any $f \in \mathbb{E} \cap H^{\infty}_L(\overline{\Omega})$, there exists $w\in H^{\infty}_L(\overline{\Omega})$ such that $Pf=w$.
	\end{corollary}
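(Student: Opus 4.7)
The plan is to reuse the explicit construction from the proof of Theorem \ref{GLS} and observe that the extra regularity of $f$ automatically upgrades the regularity of the candidate solution. Since $P$ is globally $L$-solvable, Theorem \ref{GLS} provides an $M>0$ with $|\sigma_P(\xi)|>\jp{\xi}^{-M}$ whenever $\sigma_P(\xi)\neq 0$. Given $f\in \mathbb{E}\cap H^\infty_L(\overline{\Omega})$, I would define the candidate symbol
$$
\alpha(\xi)=\begin{cases}\sigma_P(\xi)^{-1}\widehat{f}(\xi),& \sigma_P(\xi)\neq 0,\\ 0,& \sigma_P(\xi)=0,\end{cases}
$$
exactly as in the sufficiency part of Theorem \ref{GLS}.

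Next I would use that $f\in H^\infty_L(\overline{\Omega})$ means $\widehat{f}\in\mathcal{S}(\mathcal{I})$, so for every $N>0$ there is $C_N>0$ with $|\widehat{f}(\xi)|\leq C_N\jp{\xi}^{-N}$. Combined with the lower bound on $|\sigma_P(\xi)|$, this yields, for all $\xi$ with $\sigma_P(\xi)\neq 0$,
$$
|\alpha(\xi)|\leq |\sigma_P(\xi)|^{-1}|\widehat{f}(\xi)|\leq C_N\jp{\xi}^{M-N}.
$$
Since $N$ is arbitrary and $M$ is fixed, this shows $\alpha\in\mathcal{S}(\mathcal{I})$.

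Finally, because the $L$-Fourier transform is a bijective homeomorphism between $H^\infty_L(\overline{\Omega})$ and $\mathcal{S}(\mathcal{I})$, there exists $w\in H^\infty_L(\overline{\Omega})$ with $\widehat{w}=\alpha$. By the definition of $\alpha$ and the $P$-admissibility of $f$ (which handles the indices where $\sigma_P(\xi)=0$, forcing $\widehat{f}(\xi)=0$), we get $\widehat{Pw}(\xi)=\sigma_P(\xi)\alpha(\xi)=\widehat{f}(\xi)$ for every $\xi\in\mathcal{I}$, hence $Pw=f$. There is essentially no obstacle here: the whole point is that the polynomial lower bound on $|\sigma_P|$, which was the hypothesis for $L$-solvability, is exactly what is needed to preserve the rapid decay of Fourier coefficients under division by $\sigma_P$.
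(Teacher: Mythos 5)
Your proposal is correct and follows exactly the paper's route: the paper derives this corollary by observing that in the construction of $\alpha$ from the sufficiency part of Theorem \ref{GLS}, the bound $|\sigma_P(\xi)|^{-1}<\jp{\xi}^M$ turns rapid decay of $\widehat{f}$ into rapid decay of $\alpha$, so $\alpha\in\mathcal{S}(\mathcal{I})$ and the solution $w$ lies in $H^\infty_L(\overline{\Omega})$. You have simply written out the details the paper leaves implicit; nothing further is needed.
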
 
\begin{corollary}\label{GLH->GLS}
	Let $P$ be an $L$--Fourier multiplier with symbol $\sigma_P$. If $P$ is globally $L$--hypoelliptic, then $P$ is globally $L$--solvable.
\end{corollary}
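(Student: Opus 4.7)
The plan is to reduce the statement to a direct comparison between the symbol characterizations established in Theorem \ref{GLH} and Theorem \ref{GLS}. Global $L$-hypoellipticity gives a constant $M>0$ with $|\sigma_P(\xi)|>\jp{\xi}^{-M}$ as soon as $\jp{\xi}\geq M$, while global $L$-solvability asks for a (possibly different) constant $M'>0$ with $|\sigma_P(\xi)|>\jp{\xi}^{-M'}$ whenever $\sigma_P(\xi)\neq 0$. The gap between these two conditions concerns only the indices $\xi\in\mathcal{I}$ with $\jp{\xi}<M$, so the task reduces to absorbing this low-frequency regime into a single exponent.

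First I would invoke Theorem \ref{GLH} to obtain the hypoelliptic $M$. Next I would observe that the set $\mathcal{F}:=\{\xi\in\mathcal{I}:\jp{\xi}<M\}$ is finite, which follows from $|\lambda_\xi|\to\infty$ as $|\xi|\to\infty$ together with the definition $\jp{\xi}=(1+|\lambda_\xi|^2)^{1/(2m)}$. Consequently the subset $\mathcal{F}_0:=\{\xi\in\mathcal{F}:\sigma_P(\xi)\neq 0\}$ is finite, so the positive quantity
\[
c:=\min_{\xi\in\mathcal{F}_0}|\sigma_P(\xi)|
\]
is well defined (with the convention that $\mathcal{F}_0=\emptyset$ gives a trivially vacuous bound). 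I would then choose $M'>0$ large enough so that $\jp{\xi}^{-M'}<c$ for every $\xi\in\mathcal{F}_0$ and simultaneously $M'\geq M$; any $M'$ with $M'\geq M$ and $M'>\log(1/c)/\log(1+\min_{\xi\in\mathcal{F}_0}|\lambda_\xi|^2)^{1/(2m)}$ works, though in the writeup I would just say "choose $M'$ sufficiently large".

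For this $M'$ and any $\xi$ with $\sigma_P(\xi)\neq 0$, either $\jp{\xi}\geq M$ in which case $|\sigma_P(\xi)|>\jp{\xi}^{-M}\geq \jp{\xi}^{-M'}$, or $\xi\in\mathcal{F}_0$ in which case $|\sigma_P(\xi)|\geq c>\jp{\xi}^{-M'}$ by the choice of $M'$. The condition of Theorem \ref{GLS} is therefore satisfied and $P$ is globally $L$-solvable. I do not foresee a genuine obstacle here; the only mildly delicate point is ensuring that the exponent $M'$ can be chosen uniformly, which is why the finiteness of $\mathcal{F}$ must be spelled out before collapsing the two regimes into one bound.
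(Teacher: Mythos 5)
Your proof is correct and follows essentially the same route as the paper: invoke Theorem \ref{GLH} to get the exponent $M$, observe that only finitely many $\xi$ satisfy $\jp{\xi}<M$, and enlarge the exponent to some $M'$ that also covers these finitely many indices where $\sigma_P(\xi)\neq 0$, so that Theorem \ref{GLS} applies. The only difference is that you spell out the choice of $M'$ via the minimum $c$ over the finite exceptional set, which the paper leaves implicit.
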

\begin{proof}
By Theorem \ref{GLH}, since $P$ is globally $L$--hypoelliptic there exists $M>0$ such that $|\sigma_P(\xi)|> \jp{\xi}^{-M}$ for every $\xi \in \mathcal{I}$ satisfying $\jp{\xi}>M$. As $\jp{\xi}<M$ only for a finitely many $\xi \in \mathcal{I}$, we obtain $M'>0$ such that $|\sigma_P(\xi)|>\jp{\xi}^{-M'}$, whenever $\sigma_P(\xi)\neq 0$. By Theorem \ref{GLS} the operator $P$ is globally $L$--solvable.
\end{proof}

The next results are about the global $L^*$--solvability of an $L^*$--Fourier multiplier and because the techniques are the same as the previous results the proofs are omitted.
	\begin{theorem}\label{GL*S}
		Let $P$ be an $L^*$-Fourier multiplier with symbol $\tau_P$. Then $P$ is globally $L^*$--solvable if and only if there exists $M>0$ such that
		$$
		|\tau_P(\xi)| > \jp{\xi}^{-M},
		$$
		whenever $\tau_P(\xi) \neq 0$. Moreover, if $P$ is globally $L^*$--solvable and $f \in \mathbb{E}_*\cap H^{-\infty}_{L^*}(\overline{\Omega})$, then there exists $w \in H^{-\infty}_{L^*}(\overline{\Omega})$ such that $Pw=f$.
	\end{theorem}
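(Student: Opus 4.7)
The plan is to transcribe the proof of Theorem \ref{GLS} verbatim with the $L$-Fourier transform $\widehat{\cdot}$ replaced by the $L^*$-Fourier transform $\widehat{\cdot}_*$, the symbol $\sigma_P$ replaced by $\tau_P$, the basis $\{u_\xi\}$ replaced by $\{v_\xi\}$, and the admissible set $\mathbb{E}$ replaced by $\mathbb{E}_*$. All the underlying machinery needed — the bijective homeomorphisms $H^\infty_{L^*}(\overline{\Omega}) \leftrightarrow \mathcal{S}(\mathcal{I})$ and $H^{-\infty}_{L^*}(\Omega) \leftrightarrow \mathcal{S}'(\mathcal{I})$ furnished by the $L^*$-Fourier transform, along with the inversion formulas — is already set up in Section \ref{nonharmonic}, and the identity $\widehat{Pw}_*(\xi) = \tau_P(\xi)\widehat{w}_*(\xi)$ for $L^*$-distributions $w$ follows by the same computation as its $L$-counterpart, invoking Proposition \ref{symbol-adjoint}.

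For sufficiency, given $f \in \mathbb{E}_*$ I would define
$$
\alpha(\xi) := \begin{cases} \tau_P(\xi)^{-1}\widehat{f}_*(\xi), & \tau_P(\xi) \neq 0, \\ 0, & \tau_P(\xi) = 0, \end{cases}
$$
and combine the polynomial bound $|\widehat{f}_*(\xi)| \leq C\jp{\xi}^K$ coming from $\widehat{f}_* \in \mathcal{S}'(\mathcal{I})$ with the hypothesis $|\tau_P(\xi)|^{-1} < \jp{\xi}^{M}$ on the support of $\alpha$ to conclude $\alpha \in \mathcal{S}'(\mathcal{I})$. Taking $w \in H^{-\infty}_{L^*}(\Omega)$ with $\widehat{w}_* = \alpha$, the admissibility $f \in \mathbb{E}_*$ makes $\tau_P(\xi)\alpha(\xi) = \widehat{f}_*(\xi)$ hold at every $\xi$, so $Pw = f$.

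For necessity, I would argue by contrapositive: if the conclusion fails, I extract a sequence $\{\xi_k\}$ with $0 < |\tau_P(\xi_k)| < \jp{\xi_k}^{-k}$ and let $\alpha$ be its indicator function, which lies in $\mathcal{S}'(\mathcal{I})$. The $L^*$-distribution $f$ with $\widehat{f}_* = \alpha$ is $P$-admissible since $\alpha$ vanishes wherever $\tau_P$ does. Any candidate solution $Pw = f$ would force $|\widehat{w}_*(\xi_k)| = |\tau_P(\xi_k)|^{-1} > \jp{\xi_k}^{k}$ for every $k$, contradicting $\widehat{w}_* \in \mathcal{S}'(\mathcal{I})$.

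For the \emph{moreover} claim, I would observe that when $\widehat{f}_* \in \mathcal{S}(\mathcal{I})$ the rapid decay of $\widehat{f}_*$ absorbs the polynomial factor $\jp{\xi}^{M}$, so the same $\alpha$ defined above already belongs to $\mathcal{S}(\mathcal{I})$ and its $L^*$-Fourier preimage $w$ therefore lies in $H^\infty_{L^*}(\overline{\Omega})$ — exactly as in the corollary following Theorem \ref{GLS}. I do not anticipate any real obstacle; the only point worth double-checking is the extension of $P$ to $H^{-\infty}_{L^*}(\Omega)$ together with the identity $\widehat{Pw}_*(\xi) = \tau_P(\xi)\widehat{w}_*(\xi)$ in that larger space, which parallels the $L$-case verbatim via Proposition \ref{symbol-adjoint}.
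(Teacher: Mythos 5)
Your proposal is correct and matches the paper's intent exactly: the paper omits this proof precisely because, as it states, ``the techniques are the same as the previous results,'' and your argument is the verbatim transcription of the proof of Theorem \ref{GLS} (and of the corollary following it, for the \emph{moreover} part) with $\widehat{\cdot}$, $\sigma_P$, $\mathbb{E}$ replaced by $\widehat{\cdot}_*$, $\tau_P$, $\mathbb{E}_*$. No gaps beyond those already present in the paper's own $L$-version.
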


\begin{corollary}
		Let $P$ be an $L^*$--Fourier multiplier with symbol $\tau_P$. If $P$ is globally $L^*$--hypoelliptic, then $P$ is globally $L^*$--solvable.
\end{corollary}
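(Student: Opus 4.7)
The plan is to mirror the argument of Corollary \ref{GLH->GLS} verbatim, only replacing the symbol $\sigma_P$ by $\tau_P$ and the $L$-theorems by their $L^*$-counterparts, so essentially no new ideas are required.

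First I would invoke the $L^*$-analog of Theorem \ref{GLH} (stated just above Corollary \ref{GLH->GLS} in the text): since $P$ is assumed globally $L^*$-hypoelliptic, there exists $M>0$ such that
$$
\jp{\xi}\geq M \implies |\tau_P(\xi)| > \jp{\xi}^{-M}.
$$
In particular, $\tau_P(\xi)\neq 0$ for every $\xi\in\mathcal{I}$ with $\jp{\xi}\geq M$, and the required lower bound already holds on that part of $\mathcal{I}$.

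Next I would handle the remaining, finite, exceptional set. Because $|\lambda_\xi|\to\infty$ as $|\xi|\to\infty$, the condition $\jp{\xi}<M$ is satisfied by only finitely many $\xi\in\mathcal{I}$. Consider the (finite) subset
$$
F := \{\xi\in\mathcal{I}:\ \jp{\xi}<M \text{ and } \tau_P(\xi)\neq 0\}.
$$
If $F=\emptyset$, we are already done with $M'=M$. Otherwise, set
$$
M' := \max\!\left(M,\ \max_{\xi\in F}\frac{\log\bigl(|\tau_P(\xi)|^{-1}+1\bigr)}{\log \jp{\xi}}\right),
$$
(or, more simply, pick any $M'\geq M$ large enough that $\jp{\xi}^{-M'}<|\tau_P(\xi)|$ for every $\xi\in F$, which is possible since $F$ is finite and each $|\tau_P(\xi)|$ is strictly positive there). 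Then $|\tau_P(\xi)|>\jp{\xi}^{-M'}$ whenever $\tau_P(\xi)\neq 0$, and Theorem \ref{GL*S} immediately yields the global $L^*$-solvability of $P$.

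There is really no obstacle here: the only subtlety is recognizing that the hypoellipticity estimate only controls large $\jp{\xi}$, while the solvability estimate is required for all nonzero values of the symbol, and this gap is bridged by the finiteness of $\{\jp{\xi}<M\}$ coming from $|\lambda_\xi|\to\infty$. The argument is word-for-word the one used for Corollary \ref{GLH->GLS}, which is why the author's pattern of stating the $L^*$ results without proof applies perfectly.
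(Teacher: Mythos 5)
Your argument is correct and is exactly the one the paper intends: the paper omits this proof precisely because it is word-for-word the proof of Corollary \ref{GLH->GLS}, namely obtaining the lower bound for $\jp{\xi}\geq M$ from the $L^*$-version of Theorem \ref{GLH}, absorbing the finitely many remaining $\xi$ with $\tau_P(\xi)\neq 0$ into a larger exponent $M'$, and concluding via Theorem \ref{GL*S}. The only cosmetic caveat is that your explicit formula for $M'$ divides by $\log\jp{\xi}$, which vanishes when $\lambda_\xi=0$, but your parenthetical fallback (choose $M'$ by finiteness) is the paper's own argument and suffices.
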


\begin{corollary}
	Let $P$ be an $L$--Fourier multiplier with symbol $\sigma_P$. Then $P$ is globally $L$--solvable if and only if $P^*$ is globally $L^*$--solvable.
\end{corollary}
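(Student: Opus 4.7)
The plan is to deduce the equivalence directly from the two characterization results, Theorem \ref{GLS} and Theorem \ref{GL*S}, by matching the quantitative symbol bound for $P$ with the corresponding bound for $P^*$ via Proposition \ref{symbol-adjoint}.

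First I would invoke Proposition \ref{symbol-adjoint} to identify the symbol of the adjoint: since $P$ is an $L$--Fourier multiplier with symbol $\sigma_P$, the operator $P^*$ is an $L^*$--Fourier multiplier with symbol $\tau_{P^*} = \overline{\sigma_P}$. In particular, for every $\xi \in \mathcal{I}$,
$$
|\tau_{P^*}(\xi)| = |\sigma_P(\xi)|, \qquad \tau_{P^*}(\xi) \neq 0 \iff \sigma_P(\xi) \neq 0.
$$
Thus the set of $\xi$ on which a lower bound needs to be verified, and the value of that lower bound, are identical for the two symbols.

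Next I would simply read off the conclusion. By Theorem \ref{GLS}, $P$ is globally $L$--solvable if and only if there exists $M>0$ such that $|\sigma_P(\xi)| > \jp{\xi}^{-M}$ whenever $\sigma_P(\xi) \neq 0$. By Theorem \ref{GL*S}, $P^*$ is globally $L^*$--solvable if and only if there exists $M>0$ such that $|\tau_{P^*}(\xi)| > \jp{\xi}^{-M}$ whenever $\tau_{P^*}(\xi) \neq 0$. By the identification in the previous step these two quantitative conditions are literally the same assertion, so the two solvability properties are equivalent.

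I do not expect any significant obstacle: the proof is a one-line consequence of Proposition \ref{symbol-adjoint} combined with the two characterization theorems. The only bookkeeping concern is that global $L$--solvability refers to the extension of $P$ to $H^{-\infty}_L(\Omega)$ (and similarly for $P^*$ on $H^{-\infty}_{L^*}(\Omega)$); I would briefly note that the extension $\jp{Aw,\varphi} := \jp{w,\overline{A^*\overline{\varphi}}}$ defined in Section~\ref{nonharmonic} is the same one used in Theorem \ref{GLS}, so the characterization applies verbatim and no additional compatibility needs to be verified.
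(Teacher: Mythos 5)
Your proof is correct and matches the paper's own argument exactly: the paper also deduces the corollary from Theorems \ref{GLS} and \ref{GL*S} via the identity $\tau_{P^*}(\xi) = \overline{\sigma_P(\xi)}$ from Proposition \ref{symbol-adjoint}. Your additional remarks on the coincidence of zero sets and the compatibility of the distributional extension are fine but not needed beyond what the paper states.
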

\begin{proof}
	Follows from Theorems \ref{GLS} and \ref{GL*S} by the fact that $\tau_{P^*}(\xi) = \overline{\sigma_P(\xi)}$, for all $\xi \in \mathcal{I}$.
\end{proof}

	\section{Example: Non periodic boundary conditions}\label{sect: example}
	Let $\Omega:=(0,1)^2$ and $h\in \mathbb{R}^2$ with $h>0$, that is, $h_j>0$, $j=1,2.$ Consider the operator $L_h=O^{(2)}_h$ on $\Omega$ defined by the differential operator
	$$
	O_h^{(2)}:=L_h=\frac{\partial^2}{\partial x_1^2} + \frac{\partial^2}{\partial x_2^2},
	$$
	together with the boundary conditions:
\begin{equation}\label{BC}\tag{BC}
\left.h_{j} f(x)\right|_{x_{j}=0}=\left.f(x)\right|_{x_{j}=1},\left.\quad h_{j} \frac{\partial f}{\partial x_{j}}(x)\right|_{x_{j}=0}=\left.\frac{\partial f}{\partial x_{j}}(x)\right|_{x_{j}=1}, \quad j=1,2.
\end{equation}
and the domain
$$
\operatorname{Dom}\left(L_h\right)=\left\{f \in L^2(\Omega); \ L_h f \in L^2(\Omega) \text{ and } f \text { satisfies }\eqref{BC}\right\}.
$$
Here, $\mathcal{I}=\mathbb{Z}^2$ and the system of eigenfunctions of the operator $L_h$ is
$$
\left\{u_{\xi}(x)=h^{x} \mathrm{e}^{2 \pi i x \cdot \xi}, \xi \in \mathbb{Z}^{2}\right\},
$$
with conjugate system
$$
\left\{v_{\xi}(x)=h^{-x} \mathrm{e}^{2 \pi i x \cdot \xi}, \xi \in \mathbb{Z}^{2}\right\},
$$
where $h^x:=h_1^{x_1}h_2^{x_2}$ and $x\cdot \xi = x_1\xi_1+x_2\xi_2$. Thus, we have that
$$
L_hu_{\xi}(x)=\lambda_\xi u_\xi(x), \quad L_h^*v_\xi(x)=\overline{\lambda_\xi} v_\xi(x),
$$
with
$
\lambda_\xi = (\log h_1 + 2\pi i\xi_1)^2+(\log h_2 + 2\pi i\xi_2)^2,
$
and we will denote $\jp{\xi}:=(1+|\lambda_\xi|^2)^{1/4}$.

Notice that $\overline{u_\xi}=u_{-\xi}$, $\overline{v_\xi}=v_{-\xi}$, and $\overline{\lambda_\xi} = \lambda_{-\xi}$, for all $\xi \in \mathbb{Z}^2$.

\begin{lemma}\label{equiv-jp}
	There exists $c,C>0$ such that for all $\xi \in \mathbb{Z}^2$ we have
	$$
	c\sqrt{1+|\xi|^2} \leq \jp{\xi} \leq C\sqrt{1+|\xi|^2},
	$$
	where $|\xi|^2:=|\xi_1|^2+|\xi_2|^2$.
\end{lemma}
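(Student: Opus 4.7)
The plan is a direct computation of $|\lambda_\xi|^2$ followed by two-sided polynomial estimates in $|\xi|$. Writing $a_j := \log h_j$ and $b_j := 2\pi \xi_j$, expand
$$
\lambda_\xi = (a_1 + i b_1)^2 + (a_2 + i b_2)^2 = \bigl(a_1^2 + a_2^2 - b_1^2 - b_2^2\bigr) + 2i\bigl(a_1 b_1 + a_2 b_2\bigr),
$$
and set $A := a_1^2 + a_2^2$, so that
$$
|\lambda_\xi|^2 = \bigl(A - 4\pi^2 |\xi|^2\bigr)^2 + 16\pi^2\bigl(a_1 \xi_1 + a_2 \xi_2\bigr)^2.
$$

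For the upper bound, I would just use $|z_1^2 + z_2^2| \le |z_1|^2 + |z_2|^2$ to obtain
$$
|\lambda_\xi| \le (a_1^2 + b_1^2) + (a_2^2 + b_2^2) = A + 4\pi^2 |\xi|^2,
$$
which immediately gives $1 + |\lambda_\xi|^2 \le C_1 (1+|\xi|^2)^2$, hence $\jp{\xi} \le C(1+|\xi|^2)^{1/2}$.

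For the lower bound, I would throw away the imaginary part and use the real part alone: $|\lambda_\xi|^2 \ge (4\pi^2|\xi|^2 - A)^2$. Fix $N_0 \in \mathbb{N}$ with $4\pi^2 N_0^2 \ge 2A$; then for all $\xi \in \mathbb{Z}^2$ with $|\xi| \ge N_0$ one has $4\pi^2|\xi|^2 - A \ge 2\pi^2 |\xi|^2$, so $|\lambda_\xi|^2 \ge 4\pi^4 |\xi|^4$. This yields $\jp{\xi} \ge c_0 \sqrt{1+|\xi|^2}$ for all such $\xi$, with a constant $c_0$ depending only on $h$. Only finitely many $\xi \in \mathbb{Z}^2$ remain to be handled (those with $|\xi| < N_0$); for these we use the trivial bound $\jp{\xi} \ge 1$ together with $\sqrt{1+|\xi|^2} \le \sqrt{1+N_0^2}$, and absorb the discrepancy by shrinking $c_0$ if necessary to some $c>0$.

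No step is really an obstacle; the only mildly delicate point is making sure that the finitely many small $\xi$ do not destroy the lower bound, which is why I split into the cases $|\xi| \ge N_0$ and $|\xi| < N_0$ rather than trying to get a single clean inequality from the expression for $|\lambda_\xi|^2$. Note in particular that we never need $\lambda_\xi \neq 0$, since the weight $\jp{\xi}$ includes the $+1$ shift.
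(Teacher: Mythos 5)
Your proof is correct and follows essentially the same route as the paper: expand $\lambda_\xi$ into real and imaginary parts, bound $|\lambda_\xi|$ from above by the triangle inequality, and from below by the real part alone, treating the finitely many small $\xi$ separately. If anything, your explicit threshold $N_0$ with $4\pi^2 N_0^2 \ge 2A$ handles the low frequencies more carefully than the paper's intermediate claim $c_1(|\xi|^2-1)\le|\lambda_\xi|$, which is not literally valid for all nonzero $\xi$ (only for $|\xi|$ large), so your version is the cleaner one.
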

\begin{proof}
	Notice that
	$$
	\lambda_\xi = \left[(\log h_1)^2+(\log h_2)^2 - 4\pi^2|\xi|^2\right] + i4\pi(\xi_1\log h_1+\xi_2\log h_2).
	$$
	Hence, we obtain that
\begin{align*}
|\lambda_\xi|&\leq \left|(\log h_1)^2+(\log h_2)^2 - 4\pi^2|\xi|^2\right|+4\pi|\xi_1\log h_1+\xi_2\log h_2|\\
&\leq (\log h_1)^2+(\log h_2)^2+4\pi^2|\xi|^2+|\xi_1||\log h_1|+|\xi_2||\log h_2|
\end{align*}
Since $|\xi_1|+|\xi_2| \leq |\xi|^2$, for all $\xi \in \mathbb{Z}^2$, we get a constant $C>0$ such that
\begin{equation}\label{sup-ineq}
|\lambda_\xi| \leq C(1+|\xi|^2)
\end{equation}
On the other hand, we have
$$
\left|(\log h_1)^2+(\log h_2)^2 - 4\pi^2|\xi|^2\right| \leq |\lambda_\xi|.
$$
Setting $c_1:=\min\{(\log h_1)^2+(\log h_2)^2,4\pi^2 \}$ we obtain that $c_1(|\xi|^2-1) \leq |\lambda_\xi|$, for all $\xi \in \mathbb{Z}^2\setminus \{(0,0)\}$. Here we assume that $c_1\neq 0$, because when $c_1=0$ we have that $h=(1,1)$ and the proof of the lemma is trivial in this case. Hence, there exists $c>0$ such that
\begin{equation}\label{inf-ineq}
	c(1+|\xi|^2) \leq |\lambda_\xi|,
\end{equation}
for all $|\xi| > 1$. The results follows from the expression for $\jp{\xi}$ and by the inequalities \eqref{sup-ineq} and \eqref{inf-ineq}.

\end{proof}

\begin{proposition}\label{derivative-Fourier}
	Let $f\in H^{\infty}_{L_h}(\overline{\Omega})$. We have for $j=1,2$ that
	$$
	\widehat{\frac{\partial f}{\partial x_j}}(\xi) = (\log h_j+2\pi i \xi_j) \widehat{f}(\xi).
	$$
\end{proposition}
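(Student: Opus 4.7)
My plan is to compute $\widehat{\partial f/\partial x_j}(\xi)$ directly from the definition \eqref{Fourier-def}, recalling that $\overline{v_\xi(x)} = h^{-x}\mathrm{e}^{-2\pi i x\cdot \xi}$, and then integrate by parts in the $x_j$ variable. The key observation that will make everything work is that the boundary terms vanish thanks to the boundary condition \eqref{BC} coupled with the fact that $\xi_j \in \mathbb{Z}$ makes $\mathrm{e}^{\pm 2\pi i \xi_j} = 1$.

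Concretely, I will treat $j=1$ (the case $j=2$ being entirely analogous by symmetry). Writing
$$
\widehat{\tfrac{\partial f}{\partial x_1}}(\xi) = \int_0^1\!\!\int_0^1 \tfrac{\partial f}{\partial x_1}(x)\, h_1^{-x_1} h_2^{-x_2}\, \mathrm{e}^{-2\pi i x\cdot\xi}\,\mathrm{d}x_1\mathrm{d}x_2,
$$
I integrate by parts in the inner $x_1$-integral. The boundary contribution at $x_1=0,1$ is
$$
\bigl[f(x_1,x_2)\, h_1^{-x_1}\mathrm{e}^{-2\pi i x_1\xi_1}\bigr]_{x_1=0}^{x_1=1} = f(1,x_2)\,h_1^{-1}\mathrm{e}^{-2\pi i\xi_1} - f(0,x_2),
$$
which vanishes since $\mathrm{e}^{-2\pi i\xi_1}=1$ and the first boundary condition in \eqref{BC} gives $f(1,x_2) = h_1 f(0,x_2)$.

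The remaining term is $-\int_0^1 f(x_1,x_2)\,\partial_{x_1}\!\bigl(h_1^{-x_1}\mathrm{e}^{-2\pi i x_1\xi_1}\bigr)\mathrm{d}x_1$. Since $\partial_{x_1}\!\bigl(h_1^{-x_1}\mathrm{e}^{-2\pi i x_1\xi_1}\bigr) = -(\log h_1 + 2\pi i\xi_1)\,h_1^{-x_1}\mathrm{e}^{-2\pi i x_1\xi_1}$, the factor $(\log h_1 + 2\pi i\xi_1)$ pops out of the integral, and restoring the integration in $x_2$ recovers exactly $\widehat{f}(\xi)$. This yields the claim. The only possible concern is justifying integration by parts, but since $f \in H^\infty_{L_h}(\overline{\Omega}) \subset \operatorname{Dom}(L_h)$ implies $f$ and $\partial f/\partial x_1$ are square-integrable (indeed smooth in the interior with the required boundary traces) and $h_1^{-x_1}\mathrm{e}^{-2\pi i x_1\xi_1}$ is smooth on $[0,1]$, the one-dimensional integration by parts formula applies pointwise in $x_2$, and Fubini handles the double integral. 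No step looks like a real obstacle; the only subtle point worth highlighting is the cancellation in the boundary term, which is precisely why the boundary conditions \eqref{BC} were designed to produce the eigenfunctions $u_\xi, v_\xi$.
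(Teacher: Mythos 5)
Your proof is correct and follows essentially the same route as the paper's: integrate by parts in the $x_j$ variable, observe that the boundary term vanishes because the boundary condition \eqref{BC} together with $\mathrm{e}^{\pm 2\pi i\xi_j}=1$ makes $f\,\overline{v_\xi}$ match at $x_j=0$ and $x_j=1$, and read off the factor $(\log h_j+2\pi i\xi_j)$ from $\partial_{x_j}\overline{v_\xi}$. You simply spell out the boundary-term cancellation more explicitly than the paper does.
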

\begin{proof}
	Let $f\in H^{\infty}_{L_h}(\overline{\Omega})$. Since $f\in \operatorname{Dom}(L_h)$, we obtain that $f(x)\overline{v_\xi(x)}\bigg|_{x_j=0}^1=0$. Hence, by integration by parts we obtain
	$$
	\widehat{\frac{\partial f}{\partial x_j}}(\xi) =\int_\Omega \frac{\partial f}{\partial x_j}(x) \overline{v_\xi(x)} \mathrm{d} x =  - \int_\Omega f(x) \overline{\frac{\partial v_\xi}{\partial x_j}(x)} \mathrm{d} x. 
	$$
	Therefore,
	$$
	\widehat{\frac{\partial f}{\partial x_j}}(\xi)= - \int_\Omega f(x) \overline{\frac{\partial v_\xi}{\partial x_j}(x)} \mathrm{d} x=(\log h_j+2\pi i \xi_j)\int_\Omega f(x) \overline{v_\xi(x)} \mathrm{d} x = (\log h_j+2\pi i \xi_j)\widehat{f}(\xi).
	$$
\end{proof}
\subsection{Constant coefficient case}\label{subsect: constant}

Consider the operator $P:H^{-\infty}_{L_h}(\Omega) \to H^{-\infty}_{L_h}(\Omega)$ given by
$$
P=\frac{\partial}{\partial{x_1}}+c\frac{\partial}{\partial{x_2}},
$$
with $c \in \mathbb{C}\setminus\{0\}$.
When $h=(1,1)$ we have periodic boundary conditions and the analysis considered in this paper coincides with the classical toroidal calculus. In particular, we have the following characterization of the global properties of the operator $P$ on the torus $\mathbb{T}^2$.
\begin{definition}
	A Liouville number is a real number $y$ with the property that, for every positive integer $n$, there exists infinitely many pairs of integers $(p,q)$, with $q>1$ such that
	$$
	0<\left|y-\frac{p}{q}\right| < \frac{1}{q^n}.
	$$
\end{definition}
\begin{theorem}[Greenfield-Wallach, \cite{GW72}; Hounie, \cite{Hou79}] The operator $P:\mathcal{D}'(\mathbb{T}^2) \to \mathcal{D}'(\mathbb{T}^2)$ is globally hypoelliptic if and only if either $\operatorname{Im} c \neq 0$ or $c$ is an irrational non-Liouville number. Moreover, the operator  $P:\mathcal{D}'(\mathbb{T}^2) \to \mathcal{D}'(\mathbb{T}^2)$ is globally solvable if and only if either $\operatorname{Im} c \neq 0$, or $c$ is a rational number, or $c$ is an irrational non-Liouville number.
\end{theorem}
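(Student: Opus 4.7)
The plan is to reduce the statement to the diophantine criteria of Theorems \ref{GLH} and \ref{GLS} after an explicit computation of $\sigma_P$. Setting $h=(1,1)$ kills the $\log h_j$ terms, so Proposition \ref{derivative-Fourier} yields
$$\sigma_P(\xi) = 2\pi i(\xi_1 + c\xi_2), \quad \xi \in \mathbb{Z}^2.$$
Lemma \ref{equiv-jp} gives $\jp{\xi}\asymp\sqrt{1+|\xi|^2}$, so both criteria become questions about lower bounds for $|\xi_1 + c\xi_2|$ in terms of powers of $|\xi|$.

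For hypoellipticity I would argue by cases on $c$. If $\operatorname{Im} c\neq 0$, then $|\xi_1 + c\xi_2|\geq |\operatorname{Im}(c)|\,|\xi_2|$, which combined with the trivial bound $|\xi_1+c\xi_2|=|\xi_1|$ on $\xi_2=0$ gives $|\sigma_P(\xi)|\gtrsim 1$ for all $\xi\neq 0$, so Theorem \ref{GLH} applies. If $c\in\mathbb{R}$ is rational, $\sigma_P$ vanishes on an infinite sublattice and Theorem \ref{GLH} immediately fails. If $c$ is irrational, $\sigma_P(\xi)$ only vanishes at $\xi=0$, and the heart of the argument is the classical equivalence: $c$ is non-Liouville if and only if there exist $C,\mu>0$ with $|qc-p|\geq C q^{-\mu}$ for all $p\in\mathbb{Z}$, $q\in\mathbb{N}$. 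Taking $(p,q)=(-\xi_1,\xi_2)$ with $\xi_2>0$ and using symmetry in the sign of $\xi_2$ yields $|\xi_1+c\xi_2|\geq C|\xi_2|^{-\mu}\gtrsim \jp{\xi}^{-\mu}$ when $\xi_2\neq 0$, and the case $\xi_2=0$ is trivial; this matches the criterion of Theorem \ref{GLH}. Conversely, a Liouville $c$ admits, for each $n$, infinitely many pairs $(p_k,q_k)$ with $|cq_k-p_k|<q_k^{-n+1}$, and the sequence $\xi^{(k)}=(-p_k,q_k)$ violates the criterion for every fixed $M$.

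For global solvability, Theorem \ref{GLS} only requires the lower bound on the set $\{\sigma_P\neq 0\}$. The cases $\operatorname{Im} c\neq 0$ and $c$ irrational non-Liouville have already been handled above. The new case is $c=p/q\in\mathbb{Q}$ in lowest terms: whenever $\sigma_P(\xi)\neq 0$, the integer $q\xi_1+p\xi_2$ is nonzero, so $|\sigma_P(\xi)|\geq 2\pi/q$ uniformly, and Theorem \ref{GLS} yields solvability. The Liouville case is again ruled out by the sequence $\xi^{(k)}$ constructed above, whose entries satisfy $\sigma_P(\xi^{(k)})\neq 0$ and $|\sigma_P(\xi^{(k)})|<2\pi \,|\xi^{(k)}|^{-k+1}$, contradicting the Theorem \ref{GLS} criterion.

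The main technical point is the diophantine translation between the (non-)Liouville property of $c$ and the polynomial lower bound for $|\xi_1+c\xi_2|$ in terms of $\jp{\xi}$; once that equivalence is made precise, both directions of both statements follow by direct application of Theorems \ref{GLH} and \ref{GLS} with no further analytic input.
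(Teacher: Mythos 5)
Your argument is correct, but note that the paper itself offers no proof of this statement: it is quoted as a classical result of Greenfield--Wallach \cite{GW72} and Hounie \cite{Hou79}, and later (in the theorem treating the case $c\log h_2=-\log h_1$) the paper again defers to ``the analysis of this case is the same as the torus case'' together with Theorems \ref{GLH} and \ref{GLS}. What you have written is precisely the derivation the paper leaves implicit: with $h=(1,1)$ the symbol is $\sigma_P(\xi)=2\pi i(\xi_1+c\xi_2)$, Lemma \ref{equiv-jp} identifies $\jp{\xi}$ with $\sqrt{1+|\xi|^2}$ up to constants, and the hypoellipticity and solvability criteria of Theorems \ref{GLH} and \ref{GLS} reduce to the standard diophantine dichotomy for $|\xi_1+c\xi_2|$. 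Your case analysis is exhaustive (every Liouville number is irrational, so the four cases $\operatorname{Im}c\neq0$, $c$ rational, $c$ irrational non-Liouville, $c$ irrational Liouville cover $\mathbb{C}\setminus\{0\}$) and each step is sound: the single zero of $\sigma_P$ at $\xi=0$ is harmless for both criteria, the rational case gives the uniform bound $2\pi/q$ off an infinite vanishing sublattice, and the Liouville case requires the diagonalization over $n$ that your sequence $\xi^{(k)}$ indicates. Two points you invoke without proof and would need to spell out in a full writeup are (i) the equivalence between the paper's definition of non-Liouville and the uniform estimate $|qc-p|\geq Cq^{-\mu}$, which uses irrationality of $c$ to absorb the finitely many exceptional pairs, and (ii) the comparison $\jp{\xi^{(k)}}\lesssim q_k$ coming from $|p_k|\lesssim q_k$, needed to convert the bound $|cq_k-p_k|<q_k^{-n+1}$ into a violation of the $\jp{\xi}^{-M}$ threshold. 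Neither is a gap in substance; both are routine. In short, your proof is a legitimate self-contained reconstruction of a result the paper only cites, carried out entirely within the paper's own Fourier-multiplier framework.
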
 
By Proposition \ref{derivative-Fourier} we have
$$
\widehat{Pw}(\xi)=(\log h_1 + 2\pi i\xi_1 + c(\log h_2 + 2\pi i\xi_2) )\widehat{w}(\xi).
$$
Setting $c=a+ib$, we obtain
$$
\widehat{Pw}(\xi)=\sigma_P(\xi)\widehat{w}(\xi),
$$
where
\begin{equation}\label{symbol-P}
	\sigma_P(\xi)=(\log (h_1h_2^a) - 2\pi b\xi_2) + i(\log h_2^b+2\pi(\xi_1 + a\xi_2)),
\end{equation}
that is, $P$ is an ${L_h}$--Fourier multiplier with symbol $\sigma_P$. Let us see in the next results that the global properties of the operator $P$ depends of the choice of $c$ as well as $h$, that is, the environment where we are working.

\begin{theorem}
	Let $L_h=O^{(2)}_h$, $c\in \mathbb{C}\setminus\{0\}$, and assume that $c \log h_2 \neq -\log h_1$. Then the operator  $P:H^{-\infty}_{L_h}(\Omega) \to H^{-\infty}_{L_h}(\Omega)$ given by
	$$
	P=\frac{\partial}{\partial{x_1}}+c\frac{\partial}{\partial{x_2}}
	$$ 
	is globally ${L_h}$-hypoelliptic.
\end{theorem}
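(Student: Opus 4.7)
The plan is to apply Theorem \ref{GLH} directly: we need to verify that the symbol $\sigma_P$ in \eqref{symbol-P} satisfies $|\sigma_P(\xi)| > \jp{\xi}^{-M}$ for all sufficiently large $\jp{\xi}$. In fact, we will prove the much stronger statement that $|\sigma_P(\xi)|$ is bounded below by a positive constant for all $\xi \in \mathbb{Z}^2$ outside a finite (in fact, at most one-point) set. Writing $c = a+ib$, we have
$$
|\sigma_P(\xi)|^2 = \bigl(\log(h_1 h_2^a) - 2\pi b\xi_2\bigr)^2 + \bigl(\log h_2^b + 2\pi(\xi_1 + a\xi_2)\bigr)^2,
$$
and the assumption $c\log h_2 \neq -\log h_1$ translates into $\log(h_1 h_2^a) \neq 0$ or $b\log h_2 \neq 0$.

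The argument splits naturally on whether $b=0$. If $b=0$, then the hypothesis forces $\log(h_1 h_2^a) \neq 0$, and the real part of $\sigma_P(\xi)$ is the nonzero constant $\log(h_1 h_2^a)$; hence $|\sigma_P(\xi)| \geq |\log(h_1 h_2^a)| > 0$ for every $\xi \in \mathbb{Z}^2$, which is trivially more than $\jp{\xi}^{-M}$ for large $\jp{\xi}$.

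If $b \neq 0$, the real part $\log(h_1 h_2^a) - 2\pi b\xi_2$ is an arithmetic progression in $\xi_2$ with common difference $-2\pi b$. Either the line $2\pi b t = \log(h_1h_2^a)$ misses $\mathbb{Z}$, in which case $|\log(h_1 h_2^a) - 2\pi b\xi_2| \geq \delta > 0$ uniformly in $\xi_2 \in \mathbb{Z}$, or it hits exactly one integer $\xi_2^\ast$. In the latter case, for $\xi_2 \neq \xi_2^\ast$ we have $|\log(h_1 h_2^a) - 2\pi b\xi_2| \geq 2\pi |b|$, while for $\xi_2 = \xi_2^\ast$ only the imaginary part $\log h_2^b + 2\pi(\xi_1 + a\xi_2^\ast)$ contributes; this is again an arithmetic progression in $\xi_1$ with common difference $2\pi$, so it vanishes for at most one $\xi_1 = \xi_1^\ast$ and is bounded away from zero for all other $\xi_1$. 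Thus $|\sigma_P(\xi)| \geq c_0 > 0$ for every $\xi \neq (\xi_1^\ast, \xi_2^\ast)$.

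In all cases, $|\sigma_P(\xi)|$ is bounded below by a positive constant outside a set containing at most one point, and in particular the estimate $|\sigma_P(\xi)| > \jp{\xi}^{-M}$ holds as soon as $\jp{\xi}$ exceeds the weight of that exceptional point. Theorem \ref{GLH} then yields global $L_h$-hypoellipticity of $P$. There is no serious obstacle here; the only thing requiring attention is the bookkeeping that isolates the possible exceptional integer point $(\xi_1^\ast,\xi_2^\ast)$ in the case $b \neq 0$, together with using the hypothesis $c\log h_2 \neq -\log h_1$ in precisely the right place to rule out the degenerate case when $b=0$.
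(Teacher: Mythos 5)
Your proof is correct and follows essentially the same strategy as the paper's: show that $|\sigma_P(\xi)|$ is bounded below by a positive constant outside an at-most-one-point subset of $\mathbb{Z}^2$, using that the real and imaginary parts of the symbol \eqref{symbol-P} are arithmetic progressions in $\xi_2$ and $\xi_1$ respectively, and then conclude via Theorem \ref{GLH}. The only difference is organizational — you split on $b=0$ versus $b\neq 0$ while the paper splits on $h_1h_2^a\neq 1$ versus $h_1h_2^a=1$ with $b \neq 0$ — and your handling of the exceptional point in the $b\neq 0$ case is, if anything, slightly more explicit than the paper's.
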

\begin{proof}
	By the hypothesis $c \log h_2 \neq -\log h_1$ we have two cases to consider:
	
	\textsc{{\textbf Case I:}} $h_1h_2^a\neq 1$
	
	In this case, we have that $\log(h_1h_2^a) \neq 0$, which implies that $\sigma_P(\xi)$ vanishes for at most one $\xi \in \mathbb{Z}^2$. Moreover, there exists $C>0$ such that
	$$
	|\sigma_P(\xi)| \geq \frac{1}{C},
	$$
	whenever $\sigma_P(\xi)\neq0$. Hence, we obtain that
	$$
	|\widehat{w}(\xi)| \leq C \left|\widehat{Pw}(\xi)\right|,
	$$
	whenever $\sigma_P(\xi) \neq 0$. In particular, we obtain that $\widehat{w} \in \mathcal{S}(\mathbb{Z}^2)$, whenever $\widehat{Pw} \in \mathcal{S}(\mathbb{Z}^2)$ . Therefore the operator $P$ is globally ${L_h}$-hypoelliptic.
	
	\textsc{{\textbf Case II:}} $h_1h_2^a= 1$ and $b \neq 0$.
	
	Since $b \neq 0$, we have that $\mathrm{Re}(\sigma_P(\xi))=0$ only when $\xi_2=0$. Again we obtain that  $\sigma_P(\xi)$ vanishes for at most one $\xi \in \mathbb{Z}^2$ and that there exists $C>0$ such that
	$
	|\sigma_P(\xi)| \geq \frac{1}{C},
	$
	whenever $\sigma_P(\xi)\neq0$. The conclusion follows as the previous case.
\end{proof}

The next corollary follows directly from Corollary \ref{GLH->GLS}.
\begin{corollary}
	Let $L_h=O^{(2)}_h$, $c\in \mathbb{C}\setminus\{0\}$, and assume that $c \log h_2 \neq -\log h_1$. Then the operator  $P:H^{-\infty}_{L_h}(\Omega) \to H^{-\infty}_{L_h}(\Omega)$ given by
	$$
	P=\frac{\partial}{\partial{x_1}}+c\frac{\partial}{\partial{x_2}}
	$$ 
	is globally ${L_h}$-solvable.
\end{corollary}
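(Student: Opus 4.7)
The plan is to invoke the previous theorem and then chain it with Corollary \ref{GLH->GLS}. The hypotheses of the corollary are literally the same as those of the preceding theorem (namely $c \in \mathbb{C}\setminus\{0\}$ and $c\log h_2 \neq -\log h_1$), so that theorem already furnishes the global $L_h$-hypoellipticity of $P$. Once hypoellipticity is in hand, Corollary \ref{GLH->GLS} states that every globally $L$-hypoelliptic $L$-Fourier multiplier is automatically globally $L$-solvable, and we are done.

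If I wanted to avoid going through hypoellipticity and apply Theorem \ref{GLS} directly, I could reread the two cases of the previous proof: in both \textsc{Case I} ($h_1 h_2^a \neq 1$) and \textsc{Case II} ($h_1 h_2^a = 1$ and $b \neq 0$), the argument produced a uniform constant $C>0$ with
\[
|\sigma_P(\xi)| \geq \tfrac{1}{C} \quad \text{whenever } \sigma_P(\xi) \neq 0.
\]
This is much stronger than the polynomial lower bound $|\sigma_P(\xi)| > \jp{\xi}^{-M}$ required by Theorem \ref{GLS}, so solvability follows directly. Either route works, but the cleaner writeup is the one-line deduction via Corollary \ref{GLH->GLS}.

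There is no real obstacle: the corollary is a formal consequence of results that have just been proved. The only thing to take care of is flagging that the hypothesis $c\log h_2 \neq -\log h_1$ is exactly what ensures $\sigma_P$ has the nonvanishing/lower-bound behavior recorded in the preceding theorem, so no additional case analysis is needed here.
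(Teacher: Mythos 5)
Your proposal is correct and matches the paper's argument exactly: the paper also derives this corollary in one line from the preceding hypoellipticity theorem combined with Corollary \ref{GLH->GLS}. The alternative direct verification via Theorem \ref{GLS} that you sketch is also valid, but unnecessary.
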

Notice that the hypothesis  $c \log h_2 \neq -\log h_1$ implies that $h \neq (1,1)$, that is, we are not in the torus setting. By the previous theorem, when we are not working in the torus, the operator $P$ is almost always globally ${L_h}$-hypoelliptic, and the only $c\in \mathbb{C}\setminus\{0\}$ that we have to analyze separately is the one that satisfies $c\log h_2 = -\log h_1$. The next theorem gives us necessary and sufficient conditions for global ${L_h}$-properties of the operator $P$ for this case.

\begin{theorem}
	Assume that $c\log h_2 = -\log h_1$. Then the operator 
	$$
	P=\frac{\partial}{\partial{x_1}}+c\frac{\partial}{\partial{x_2}}
	$$
	is globally ${L_h}$-hypoelliptic if and only if either $\mathrm{Im} (c)\neq 0$, or $c$ is an irrational non-Liouville number. Moreover, the operator $P$ is globally ${L_h}$-solvable if and only if either $\mathrm{Im} (c)\neq 0$, or $c$ is a rational number, or $c$ is an irrational non-Liouville number.
\end{theorem}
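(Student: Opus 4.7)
The plan is to invoke the symbol-level criteria of Theorems \ref{GLH} and \ref{GLS} after simplifying \eqref{symbol-P} under the constraint $c\log h_2=-\log h_1$. Writing $c=a+ib$ and splitting into real and imaginary parts, one obtains $a\log h_2=-\log h_1$ and $b\log h_2=0$, hence $\log(h_1h_2^a)=0=\log h_2^b$, and \eqref{symbol-P} collapses to
\begin{equation*}
\sigma_P(\xi)=-2\pi b\,\xi_2+2\pi i(\xi_1+a\xi_2).
\end{equation*}
The rest is a case split matching the four alternatives in the statement.

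If $b=\mathrm{Im}(c)\neq 0$, the relation $b\log h_2=0$ forces $h_2=1$, and then $\log h_1=-a\log h_2=0$ forces $h_1=1$, so we are back in the torus. Here $\sigma_P$ vanishes only at $\xi=0$, and $|\sigma_P(\xi)|\geq\min\{2\pi|b|,2\pi\}$ for every $\xi\neq 0$ (splitting on whether $\xi_2=0$ or not). Theorems \ref{GLH} and \ref{GLS} then immediately deliver hypoellipticity and solvability. If instead $c=a$ is real and rational, I write $a=p/q$ in lowest terms; the symbol $(2\pi i/q)(q\xi_1+p\xi_2)$ vanishes on the infinite set $\{k(-p,q):k\in\mathbb{Z}\}$, so the construction used in the ``only if'' direction of Theorem \ref{GLH} produces an $L_h$-distribution witnessing failure of hypoellipticity, while $|\sigma_P(\xi)|\geq 2\pi/q$ whenever it is nonzero, so Theorem \ref{GLS} yields solvability.

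If $c=a$ is real irrational non-Liouville, the Diophantine estimate $|a-p/q|\geq Cq^{-N}$ combined with Lemma \ref{equiv-jp} upgrades to $|\sigma_P(\xi)|\gtrsim\jp{\xi}^{-(N-1)}$ for all $\xi\neq 0$ (the subcase $\xi_2=0,\ \xi_1\neq 0$ is trivial), giving hypoellipticity via Theorem \ref{GLH} and solvability via Corollary \ref{GLH->GLS}. If instead $a$ is Liouville, I pick integers $p_n$ and $q_n\to\infty$ with $|aq_n-p_n|<q_n^{-(n-1)}$ and take $\xi^{(n)}:=(-p_n,q_n)\in\mathbb{Z}^2\setminus\{0\}$; irrationality gives $\sigma_P(\xi^{(n)})\neq 0$, the Liouville inequality gives $|\sigma_P(\xi^{(n)})|\lesssim q_n^{-(n-1)}$, and $|p_n|\lesssim q_n$ together with Lemma \ref{equiv-jp} gives $\jp{\xi^{(n)}}\lesssim q_n$. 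This sequence violates the decay bound required by Theorem \ref{GLS}, and a fortiori by Theorem \ref{GLH}, so $P$ is neither globally $L_h$-hypoelliptic nor globally $L_h$-solvable.

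The only mildly delicate point is the conversion of the Diophantine inequalities on $|a-p/q|$ into bounds on $|\sigma_P(\xi)|=2\pi|\xi_1+a\xi_2|$ on the $\jp{\xi}$-scale; once Lemma \ref{equiv-jp} identifies $\jp{\xi}$ with $\sqrt{1+|\xi|^2}$, polynomial decay in $|\xi_2|$ and polynomial decay in $\jp{\xi}$ are interchangeable up to adjusting the exponent, so Theorems \ref{GLH} and \ref{GLS} apply as cleanly here as in the classical torus setting.
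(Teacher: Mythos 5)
Your proposal is correct and follows essentially the same route as the paper: reduce the symbol under the constraint $c\log h_2=-\log h_1$ (noting $b\log h_2=0$ forces either $b=0$ or $h=(1,1)$), observe via Lemma \ref{equiv-jp} that the $\jp{\xi}$-scale is comparable to $\sqrt{1+|\xi|^2}$, and then apply the criteria of Theorems \ref{GLH} and \ref{GLS}. The only difference is one of detail: the paper delegates the resulting Diophantine case analysis to the Greenfield--Wallach theorem on the torus, whereas you carry out the rational/non-Liouville/Liouville estimates explicitly, which makes the argument self-contained but is not a different method.
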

\begin{proof}
	If $h=(1,1)$ then the hypothesis $c\log h_2 = -\log h_1$ holds for every $c\in \mathbb{C}$ and the result was proven by Greenfield and Wallach in \cite{GW72}.
	
	If $h\neq(1,1)$, then $\mathrm{Im} (c)=b=0$ and we have $c = a=-\frac{\log h_1}{\log h_2}$. Hence,
	$$
	\sigma_P(\xi) = i2\pi(\xi_1+a\xi_2).
	$$
	
	By Lemma \ref{equiv-jp} the analysis of this case is the same as the torus case, so by Theorem \ref{GLH} the operator $P$ is globally  $L_h$--hypoelliptic if and only if $c$ is an irrational non-Liouville number and by Theorem \ref{GLS} the operator $P$ is globally $L_h$--solvable if and only if either $c$ is a rational number, or $c$ is an irrational non-Liouville number.
\end{proof}

\begin{remark}
	Since our results are based on the bijection from $H_{L_h}^\infty(\Omega)$ to $\mathcal{S}(\mathbb{Z}^2)$ given by the ${L_h}$--Fourier transform, we obtain the same results replacing ${L_h}$ by ${L_h}^*$, because the ${L_h}^*$--Fourier transform is also a bijection from $H_{{L_h}^*}^\infty(\Omega)$ to $\mathcal{S}(\mathbb{Z}^2)$.
\end{remark}
\subsection{Variable coefficient case}\label{subsect: variable}
When $h=(1,1)$, the model described in Section \ref{nonharmonic} recover the classical harmonic analysis on the two-dimensional torus, that is, we can identify $H^\infty_{L_{(1,1)}}(\overline{\Omega})\simeq C^\infty(\mathbb{T}^2)$. We have the following property holding for any $h>0$.
\begin{proposition}
	Let ${L_h}=O^{(2)}_h$, with $h\in \mathbb{R}^2$ satisfying $h_1,h_2>0$. Then the space $H^\infty_{L_h}(\overline{\Omega})$ is a $C^\infty(\mathbb{T}^2)$--module.
\end{proposition}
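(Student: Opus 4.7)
The plan is to exploit the explicit form of the eigenfunctions $u_\xi(x) = h^x e^{2\pi i x\cdot \xi}$ in order to identify $H^\infty_{L_h}(\overline{\Omega})$ with the ``twisted'' algebra $h^x \cdot C^\infty(\mathbb{T}^2)$, from which the module property is immediate.

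First I would establish the characterization: a function $f \in L^2(\Omega)$ belongs to $H^\infty_{L_h}(\overline{\Omega})$ if and only if $f(x) = h^x g(x)$ for some $g \in C^\infty(\mathbb{T}^2)$. Indeed, if $f \in H^\infty_{L_h}(\overline{\Omega})$, the $L_h$-Fourier inversion formula gives
\[
f(x) = \sum_{\xi \in \mathbb{Z}^2} \widehat{f}(\xi)\, u_\xi(x) = h^x \sum_{\xi \in \mathbb{Z}^2} \widehat{f}(\xi)\, e^{2\pi i x\cdot \xi},
\]
and because the $L_h$-Fourier transform is a homeomorphism from $H^\infty_{L_h}(\overline{\Omega})$ onto $\mathcal{S}(\mathbb{Z}^2)$, the sequence $\{\widehat{f}(\xi)\}$ lies in $\mathcal{S}(\mathbb{Z}^2)$. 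The standard toroidal theory then says that $g(x) := \sum_\xi \widehat{f}(\xi) e^{2\pi i x\cdot \xi}$ is an element of $C^\infty(\mathbb{T}^2)$. Conversely, for $g \in C^\infty(\mathbb{T}^2)$ a direct computation gives
\[
\widehat{h^x g}(\xi) = \int_\Omega h^x g(x)\, \overline{v_\xi(x)}\, dx = \int_\Omega g(x)\, e^{-2\pi i x\cdot \xi}\, dx = \widehat{g}(\xi),
\]
which lies in $\mathcal{S}(\mathbb{Z}^2)$, and hence $h^x g \in H^\infty_{L_h}(\overline{\Omega})$.

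With this identification in hand, the module property is immediate: given $\varphi \in C^\infty(\mathbb{T}^2)$ and $f \in H^\infty_{L_h}(\overline{\Omega})$, write $f = h^x g$ with $g \in C^\infty(\mathbb{T}^2)$; then
\[
\varphi \cdot f = h^x (\varphi g),
\]
and since $C^\infty(\mathbb{T}^2)$ is closed under pointwise multiplication, $\varphi g \in C^\infty(\mathbb{T}^2)$, so $\varphi f \in H^\infty_{L_h}(\overline{\Omega})$. Bilinearity and the remaining module axioms are automatic from pointwise multiplication in $L^2(\Omega)$.

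There is no real obstacle: the only step deserving any care is the passage between rapid decay of $L_h$-Fourier coefficients and rapid decay of toroidal Fourier coefficients, and the factor $h^x$ makes these exactly the same sequence, as the computation of $\widehat{h^x g}(\xi)$ above makes transparent. An alternative, equivalent route would be to observe that the $L_h$-Fourier coefficients of $\varphi f$ are given by the discrete convolution $\widehat{\varphi} * \widehat{f}$ of two elements of $\mathcal{S}(\mathbb{Z}^2)$, which lies in $\mathcal{S}(\mathbb{Z}^2)$; but the multiplicative factorization is cleaner and makes the algebraic content of the statement fully transparent.
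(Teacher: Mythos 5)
Your proof is correct, but it takes a genuinely different route from the paper. The paper argues directly from the definition $H^\infty_{L_h}(\overline{\Omega})=\bigcap_k \operatorname{Dom}(L_h^k)$: it checks that $fg$ and all iterates $L_h^k(fg)$ lie in $L^2(\Omega)$ and satisfy the boundary conditions \eqref{BC}, using the Leibniz rule and the periodicity of $g$ to see that the twisted conditions $h_j(\cdot)|_{x_j=0}=(\cdot)|_{x_j=1}$ are preserved under multiplication by a periodic function. You instead prove the stronger structural statement $H^\infty_{L_h}(\overline{\Omega})=h^x\cdot C^\infty(\mathbb{T}^2)$ via the Fourier side, after which the module property is multiplication in the algebra $C^\infty(\mathbb{T}^2)$. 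Your identification is correct and arguably more illuminating (it also explains the remark that $h=(1,1)$ recovers the toroidal setting), but it leans on heavier machinery: the homeomorphism $H^\infty_{L_h}(\overline{\Omega})\simeq\mathcal{S}(\mathbb{Z}^2)$ and, in the converse direction, the injectivity of the $L_h$--Fourier transform on $L^2(\Omega)$. That last point is the one place you should be explicit: from $\widehat{h^xg}=\widehat{g}\in\mathcal{S}(\mathbb{Z}^2)$ the bijectivity only gives you \emph{some} $\tilde f\in H^\infty_{L_h}(\overline{\Omega})$ with these coefficients, and to conclude $\tilde f=h^xg$ in $L^2(\Omega)$ you need the lower bound $m^2\|\tilde f-h^xg\|^2_{L^2}\leq\sum_\xi|\widehat{\tilde f}(\xi)-\widehat{h^xg}(\xi)|^2=0$ from Lemma \ref{plancherel} (equivalently, completeness of the system $\{v_\xi\}$). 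With that citation added, your argument is complete; the paper's approach avoids this by never leaving physical space, at the cost of an ``analogously for $L_h^k(fg)$'' step that your factorization handles for free.
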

\begin{proof}
	Let us proof that for any $f\in H^\infty_{L_h}(\overline{\Omega})$ and $g \in C^\infty(\mathbb{T}^2)$ we have that $fg \in H^\infty_{L_h}(\overline{\Omega})$. Indeed, we have that $fg \in L^2(\Omega)$ and $L_h^k(fg) \in L^2(\Omega)$, for any $k\in \mathbb{N}$. Let us check that $fg$ satisfies \eqref{BC}:
	\begin{itemize}
		\item $h_j(fg)(x)\big|_{x_j=0} = \left(h_jf(x)\big|_{x_j=0}\right)g(x)\big|_{x_j=0} =   f(x)\big|_{x_j=1}g(x)\big|_{x_j=1}=(fg)(x)\big|_{x_j=1}$.
			\item $\begin{aligned}h_j\dfrac{\partial(fg)}{\partial x_j}(x)\big|_{x_j=0} &=h_j\left[\dfrac{\partial f}{\partial x_j}(x)g(x)\big|_{x_j=0}+f(x)\dfrac{\partial g}{\partial x_j}(x)\big|_{x_j=0}\right]\\ &=\left[\dfrac{\partial f}{\partial x_j}(x)g(x)\big|_{x_j=1}+f(x)\dfrac{\partial g}{\partial x_j}(x)\big|_{x_j=1}\right]\\&= \dfrac{\partial(fg)}{\partial x_j}(x)\big|_{x_j=1}\end{aligned}$.
	\end{itemize}
Proceeding analogously we conclude that $L_h^k(fg)$ satisfies the boundary condition \eqref{BC} for every $k \in \mathbb{N}$. Therefore $fg \in H^\infty_{L_h}(\overline{\Omega})$.
\end{proof}

Let $a \in C^\infty(\mathbb{T}^1)$ be a real-valued function and consider the operator $P:H_{L_h}^{-\infty}(\Omega) \to H^{-\infty}_{L_h}(\Omega)$ given by
$$
P=\frac{\partial}{\partial x_1}+ a(x_1)\frac{\partial}{\partial x_2}.
$$
The operator $P$ is well-defined because we can see $a$ as an element $\tilde{a} \in C^\infty(\mathbb{T}^2)$ satisfying $\tilde{a}(x_1,x_2)=a(x_1)$, for every $x_2 \in [0,1]$.

\begin{definition}
	Let $f\in L^1(\Omega)$ and $\xi_1 \in \mathbb{Z}$. We define the Partial ${L_h}$--Fourier coefficient of $f$ with respect to $x_1$ at $\xi_1$ as the function $\mathcal{F}_j f (\xi_1,x_2)$ given by
	$$
	\mathcal{F}_1 f (\xi_1,x_2)= \int_0^1 f(x_1,x_2)\overline{v_{\xi_1}(x_1)} \mathrm{d}x_1,
	$$
	where $v_{\xi_j}(x_j):= h_j^{-x_j}e^{2\pi i x_j\xi_j}$, for $j=1,2$. Similarly we define the Partial ${L_h}$--Fourier coefficient of $f$ with respect to $x_2$ at $\xi_2\in \mathbb{Z}$ by
	$$
	\mathcal{F}_2 f (x_1,\xi_2)= \int_0^1 f(x_1,x_2)\overline{v_{\xi_2}(x_2)} \mathrm{d}x_2.
	$$
\end{definition}
Notice that $v_\xi(x)=v_{\xi_1}(x_1)v_{\xi_2}(x_2)$. So, we have
$$
\widehat{f}(\xi_1,\xi_2) = \int_0^1  \mathcal{F}_1 f (\xi_1,x_2)\overline{v_{\xi_2}(x_2)} \mathrm{d}x_2 = \int_0^1 	\mathcal{F}_2 f (x_1,\xi_2)\overline{v_{\xi_1}(x_1)} \mathrm{d}x_1.
$$
Moreover, we can write
\begin{equation}\label{transform-partial}
f(x_1,x_2) = \sum_{\xi_1 \in \mathbb{Z}}\mathcal{F}_1f(\xi_1,x_2)u_{\xi_1}(x_1) = \sum_{\xi_2 \in \mathbb{Z}}\mathcal{F}_2f(x_1,\xi_2)u_{\xi_2}(x_2)
\end{equation}
We have also the analogous for the adjoint operator ${L_h}^*$.
\begin{definition}
	Let $f\in L^1(\Omega)$ and $\xi_1 \in \mathbb{Z}$. We define the Partial ${L_h}^*$--Fourier coefficient of $f$ with respect to $x_1$ at $\xi_1$ as the function $\mathcal{F}^*_j f (\xi_1,x_2)$ given by
	$$
	\mathcal{F}^*_1 f (\xi_1,x_2)= \int_0^1 f(x_1,x_2)\overline{u_{\xi_1}(x_1)} \mathrm{d}x_1,
	$$
	where $u_{\xi_j}(x_j):= h_j^{x_j}e^{2\pi i x_j\xi_j}$, for $j=1,2$. Similarly we define the Partial ${L_h}^*$--Fourier coefficient of $f$ with respect to $x_2$ at $\xi_2\in \mathbb{Z}$ by
	$$
	\mathcal{F}^*_2 f (x_1,\xi_2)= \int_0^1 f(x_1,x_2)\overline{u_{\xi_2}(x_2)} \mathrm{d}x_2.
	$$
\end{definition}
Notice that $u_\xi(x)=u_{\xi_1}(x_1)u_{\xi_2}(x_2)$. So, we have
$$
\widehat{f}_*(\xi_1,\xi_2) = \int_0^1  \mathcal{F}_1^* f (\xi_1,x_2)\overline{u_{\xi_2}(x_2)} \mathrm{d}x_2 = \int_0^1 	\mathcal{F}_2^* f (x_1,\xi_2)\overline{u_{\xi_1}(x_1)} \mathrm{d}x_1.
$$
Moreover, we can write
$$
f(x_1,x_2) = \sum_{\xi_1 \in \mathbb{Z}}\mathcal{F}_1^*f(\xi_1,x_2)v_{\xi_1}(x_1) = \sum_{\xi_2 \in \mathbb{Z}}\mathcal{F}^*_2f(x_1,\xi_2)v_{\xi_2}(x_2)
$$

Consider the operator $L_j =-i \dfrac{\mathrm{d}}{\mathrm{d} x_j}$ on $(0,1)$, $j=1,2$, with boundary condition
\begin{equation}\label{BC-1d}
h_jf(0)=f(1),
\end{equation}
and domain $\operatorname{Dom}(L_j)=\{f \in L^2((0,1)); L_jf \in L^2((0,1)) \text{ and } f \text{ satisfies } \eqref{BC-1d} \}$. With $\mathcal{I}=\mathbb{Z}$, the system of eigenfunctions of the operator $L_j$ is $\{u_{\xi_j}(x_j)=h_j^{x_j}e^{2\pi ix_j\xi_j} \}$ with eigenvalues $\lambda_{\xi_j} = -i\log h_j+2\pi \xi_j$, and the conjugate system is $\{v_{\xi_j}(x_j)=h_j^{-x_j}e^{2\pi ix_j\xi_j}\}$. Here, the weight is given by  $\jp{\xi_j} = (1+|\lambda_{\xi_j}|^2)^{1/2}$ and we have the following relation between the weights of the model $L_h$ and $L_j$.
\begin{proposition}
	Let $\xi \in \mathbb{Z}^2$. There exists $k,K>0$ such that
\begin{equation}\label{jp-bracket}
	k(\jp{\xi_1}+\jp{\xi_2}) \leq \jp{\xi} \leq K(\jp{\xi_1}+\jp{\xi_2})
\end{equation}
\end{proposition}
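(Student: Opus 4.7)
The plan is to reduce the inequality to a purely elementary comparison by first reducing both weights to the euclidean ones. The key observation is that Lemma \ref{equiv-jp} already tells us $\jp{\xi}\asymp\sqrt{1+\xi_1^2+\xi_2^2}$, so it remains to establish the one-dimensional analog $\jp{\xi_j}\asymp\sqrt{1+\xi_j^2}$ and then invoke a standard inequality relating $\sqrt{1+a^2+b^2}$ with $\sqrt{1+a^2}+\sqrt{1+b^2}$.

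First I would handle the one-dimensional weights. Since the eigenvalues of $L_j=-i\,d/dx_j$ are $\lambda_{\xi_j}=-i\log h_j+2\pi\xi_j$, we get directly
\[
|\lambda_{\xi_j}|^2=(\log h_j)^2+4\pi^2\xi_j^2,
\]
and hence there exist $c_j,C_j>0$ with $c_j\sqrt{1+\xi_j^2}\le \jp{\xi_j}=(1+|\lambda_{\xi_j}|^2)^{1/2}\le C_j\sqrt{1+\xi_j^2}$. This is the one-dimensional analog of Lemma \ref{equiv-jp}, but much easier since no quadratic cancellation occurs.

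Next, I would invoke Lemma \ref{equiv-jp} to get $c\sqrt{1+\xi_1^2+\xi_2^2}\le\jp{\xi}\le C\sqrt{1+\xi_1^2+\xi_2^2}$. Combining these two estimates, it suffices to prove the elementary inequality
\[
\tfrac{1}{2}\bigl(\sqrt{1+\xi_1^2}+\sqrt{1+\xi_2^2}\bigr)\le \sqrt{1+\xi_1^2+\xi_2^2}\le \sqrt{1+\xi_1^2}+\sqrt{1+\xi_2^2}.
\]
The upper bound follows from $\sqrt{1+\xi_1^2+\xi_2^2}\le\sqrt{(1+\xi_1^2)+(1+\xi_2^2)}\le\sqrt{1+\xi_1^2}+\sqrt{1+\xi_2^2}$, using the subadditivity of $\sqrt{\cdot}$ on nonnegative reals. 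The lower bound follows from $\sqrt{1+\xi_j^2}\le\sqrt{1+\xi_1^2+\xi_2^2}$ for each $j=1,2$, so the sum is at most $2\sqrt{1+\xi_1^2+\xi_2^2}$.

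Putting everything together, we take $k=\frac{c}{2\max(C_1,C_2)}$ and $K=\frac{C}{\min(c_1,c_2)}$ and obtain \eqref{jp-bracket}. There is no real obstacle here: the whole content is in Lemma \ref{equiv-jp} (the 2D quadratic-growth estimate), and the one-dimensional case together with the comparison $\sqrt{a+b}\asymp\sqrt{a}+\sqrt{b}$ is routine.
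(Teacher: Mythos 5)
Your proof is correct, but it takes a genuinely different route from the paper's. The paper exploits the algebraic identity $\lambda_\xi = -(\lambda_{\xi_1}^2+\lambda_{\xi_2}^2)$, which is immediate from the definitions, and then controls the ratio $\jp{\xi}/(\jp{\xi_1}+\jp{\xi_2})$ asymptotically (it actually asserts that this ratio tends to $1$, which is imprecise --- the ratio behaves like $|\xi|/(|\xi_1|+|\xi_2|)$ and oscillates between $1/\sqrt{2}$ and $1$ depending on the direction of $\xi$ --- but the two-sided boundedness that is actually needed does follow). You instead route everything through the Euclidean weight: Lemma \ref{equiv-jp} for $\jp{\xi}$, the explicit computation $|\lambda_{\xi_j}|^2=(\log h_j)^2+4\pi^2\xi_j^2$ for the one-dimensional weights, and the elementary two-sided comparison of $\sqrt{1+\xi_1^2+\xi_2^2}$ with $\sqrt{1+\xi_1^2}+\sqrt{1+\xi_2^2}$, all of which are verified correctly and assembled with explicit constants $k=\frac{c}{2\max(C_1,C_2)}$ and $K=\frac{C}{\min(c_1,c_2)}$. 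Your version is slightly longer but entirely avoids the limit argument and is, if anything, more airtight; the paper's version is shorter and makes visible the structural relation between the spectrum of $L_h$ and those of the one-dimensional operators $L_j$. Both approaches are valid.
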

\begin{proof}
	Notice that $\lambda_\xi = (\log h_1 + 2\pi i\xi_1)^2+(\log h_2 + 2\pi i\xi_2)^2 = -(\lambda_{\xi_1}^2 + \lambda_{\xi_2}^2)$.	Since $\jp{\xi}=(1+|\lambda_{\xi}|^2)^{1/4}$ and $\jp{\xi_j} = (1+|\lambda_{\xi_j}|^2)^{1/2}$, we have that
	$$
\lim_{|\xi|\to \infty} \frac{\jp{\xi}}{\jp{\xi_1}+\jp{\xi_2}} = 1,
	$$
so we can get $k,K>0$ satisfying \eqref{jp-bracket}.
\end{proof}
\begin{definition}
	Let $w \in H^{-\infty}_{L_h}(\Omega)$ and $\xi_1 \in \mathbb{Z}$. We define the partial ${L_h}$-Fourier coefficient of $w$ with respect to $x_1$ at $\xi_1$ as the linear functional $\mathcal{F}_1w(\xi_1,\cdot): H^{\infty}_{L^*_2}((0,1)) \to \mathbb{C}$ given by
	$$
	\jp{\mathcal{F}_1w(\xi_1,\cdot), \varphi} := \jp{w, v_{-\xi_1} \times \varphi},
	$$
	where $(v_{-\xi_1} \times \varphi)(x_1,x_2):=v_{-\xi_1}(x_1) \varphi(x_2)$, for all $\varphi \in H^{\infty}_{L^*_2}((0,1))$.
	Similarly, we define the partial ${L_h}$-Fourier coefficient of $w$ with respect to $x_2$ at $\xi_2 \in \mathbb{Z}$ as the linear functional $\mathcal{F}_2w(\cdot,\xi_2): H^{\infty}_{L^*_1}((0,1)) \to \mathbb{C}$ given by
	$$
	\jp{\mathcal{F}_2w(\cdot,\xi_2), \varphi} := \jp{w,\varphi \times v_{-\xi_2}},
	$$
	where $(\varphi \times v_{-\xi_2})(x_1,x_2):=\varphi(x_1)v_{-\xi_2}(x_2) $, for all $\varphi \in H^{\infty}_{L^*_1}((0,1))$.
\end{definition}

For $w \in H^{-\infty}_{L_h}(\Omega)$, we may write
$$
w = \sum_{\xi_1 \in \mathbb{Z}}\mathcal{F}_1w(\xi_1,\cdot)u_{\xi_1},
$$
where for $\varphi \in H^\infty_{{L_h}^*}(\Omega)$ we have
$$
\jp{w,\varphi}= \jp{\sum_{\xi_1 \in \mathbb{Z}}\mathcal{F}_1w(\xi_1,\cdot)u_{\xi_1},\varphi} := \sum_{\xi_1 \in \mathbb{Z}}\jp{\mathcal{F}_1w(\xi_1,\cdot), \mathcal{F}_1^*\varphi(-\xi_1, \cdot)}.
$$
{
The idea here is to use the partial Fourier theory to construct an automorphism $\Psi_a$ of $H^{-\infty}_{L_h}(\Omega)$, where the restriction to $H^\infty_{L_h}(\Omega)$ remains an automorphism of $H^\infty_{L_h}(\Omega)$, satisfying $\Psi_a P = P_0 \Psi_a$, where $P_0 = \partial_{x_1} +a_0\partial_{x_2}$, and $a_0=\int a(s) \mathrm{d} s$.

\begin{proposition}
When $f\in H^\infty_{L_h}(\Omega)$ we have that $\mathcal{F}_1f(\xi_1, \cdot) \in H^\infty_{L_2}((0,1))$ and $\mathcal{F}_2f(\cdot, \xi_2)\in H^\infty_{L_1}((0,1))$, for all $\xi_1,\xi_2 \in \mathbb{Z}$. When $w\in H^{-\infty}_{L_h}(\Omega)$ we have that $\mathcal{F}_1w(\xi_1, \cdot) \in H^{-\infty}_{L_2}((0,1))$ and $\mathcal{F}_2w(\cdot, \xi_2)\in H^{-\infty}_{L_1}((0,1))$, for all $\xi_1,\xi_2 \in \mathbb{Z}$.
\end{proposition}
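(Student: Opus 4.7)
My plan is to treat the test-function case and the distribution case separately; since the arguments for $\mathcal{F}_1$ and $\mathcal{F}_2$ are symmetric I will focus on $\mathcal{F}_1$.

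\textbf{Test-function case.} For $f \in H^\infty_{L_h}(\overline{\Omega})$, the ellipticity of $L_h=\Delta$ combined with $L_h^k f \in L^2(\Omega)$ for every $k$ forces $f \in C^\infty(\overline{\Omega})$. Differentiation under the integral then yields
$$
L_2^k \mathcal{F}_1 f(\xi_1,x_2) = (-i)^k \int_0^1 \partial_{x_2}^k f(x_1,x_2)\overline{v_{-\xi_1}(x_1)}\,\mathrm{d}x_1,
$$
which is continuous in $x_2 \in [0,1]$ and hence in $L^2((0,1))$. To conclude $\mathcal{F}_1 f(\xi_1,\cdot) \in H^\infty_{L_2}((0,1))$ I also need each iterate $L_2^j \mathcal{F}_1 f(\xi_1,\cdot)$ to satisfy \eqref{BC-1d}: integrating the identity $h_2\partial_{x_2}^j f(x_1,0) = \partial_{x_2}^j f(x_1,1)$ against $\overline{v_{-\xi_1}(x_1)}$ delivers exactly this relation. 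The remaining task is therefore to prove, by induction on $j$, that every pure $x_2$-derivative of $f$ inherits the $x_2$-BC. The induction uses the (BC) satisfied by $L_h^k f$ together with the BC for $\partial_{x_1}^{2k}f$ obtained by differentiating the $x_1$-BC tangentially along $x_2$, which decouples the two summands inside $L_h^k f$.

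\textbf{Distribution case.} By the characterization of $L_h$-distributions there exist constants $C,N$ such that $|\jp{w,\psi}|\leq C\|\psi\|_{H^N_{L_h^*}}$ for all $\psi \in H^\infty_{L_h^*}(\overline{\Omega})$. Taking $\psi = v_{-\xi_1} \times \varphi$ reduces continuity of $\mathcal{F}_1 w(\xi_1,\cdot)$ on $H^\infty_{L_2^*}((0,1))$ to bounding $\|v_{-\xi_1}\times\varphi\|_{H^N_{L_h^*}}$ by a seminorm of $\varphi$. The key calculation is
$$
L_h^*(v_{-\xi_1}\times\varphi) = \alpha_{\xi_1}^2\,(v_{-\xi_1}\times\varphi) - v_{-\xi_1}\times (L_2^*)^2\varphi, \qquad \alpha_{\xi_1} := -\log h_1 - 2\pi i\xi_1,
$$
so iterating $L_h^*$ produces a polynomial of degree $N$ in the scalar $\alpha_{\xi_1}^2$ and the operator $(L_2^*)^2$ acting on $\varphi$. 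Since $|\alpha_{\xi_1}|^2 \leq \jp{\xi_1}^2$ and $\|v_{-\xi_1}\|_{L^2((0,1))}$ is a finite constant, this yields an estimate of the form $\|v_{-\xi_1}\times\varphi\|_{H^N_{L_h^*}} \leq C_N\jp{\xi_1}^{2N}\|\varphi\|_{H^{2N}_{L_2^*}}$, giving the required continuity. One still needs that $v_{-\xi_1}\times\varphi \in H^\infty_{L_h^*}(\overline{\Omega})$, which is immediate from the fact that $v_{-\xi_1}$ satisfies (BC)$^*$ in $x_1$ by construction and $\varphi$ satisfies (BC)$^*$ in $x_2$ by hypothesis.

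The step I expect to be most delicate is the BC propagation in the test-function case, namely verifying that all pure $x_2$-derivatives of $f$ satisfy the $x_2$-BC. Once that bookkeeping is settled, the remainder is routine: differentiation under the integral on the smooth side and the symbolic tensor estimate on the distributional side.
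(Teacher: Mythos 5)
Your argument is correct and follows essentially the same route as the paper's proof: the smooth case rests on the $L^2$ bound, the boundary-condition check, and the identity $L_2^k\mathcal{F}_1f(\xi_1,\cdot)=(-i)^k\mathcal{F}_1\tfrac{\partial^k f}{\partial x_2^k}(\xi_1,\cdot)$, while the distribution case rests on pairing $w$ with $v_{-\xi_1}\times\varphi$ and the seminorm estimate $\|v_{-\xi_1}\times\varphi\|_{H^k_{L_h^*}}\leq C'\|\varphi\|_{H^{2k}_{L_2^*}}$. You merely make explicit two points the paper leaves implicit, namely the induction showing that the pure $x_2$-derivatives of $f$ inherit the boundary condition and the computation of $L_h^*(v_{-\xi_1}\times\varphi)$ behind the tensor estimate.
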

\begin{proof}
Let $f\in H^\infty_{{L_h}}(\Omega)$ and $\xi_1 \in \mathbb{Z}$. First, let us show that $\mathcal{F}_1f(\xi_1,\cdot) \in L^2((0,1))$. We have
\begin{align*}
	\int_0^1 \left|\mathcal{F}_1f(\xi_1,x_2)\right|^2 \mathrm{d}x_2 &= \int_0^1\left|\int_0^1 f(x_1,x_2) v_{\xi_1}(x_1) \mathrm{d}x_1\right|^2 \mathrm{d}x_2 \\
	&\leq \int_0^1 \left(\int_0^1 |f(x_1,x_2)v_{\xi_1}(x_1)| \mathrm{d}x_1\right)^2 \mathrm{d}x_2 \\
	&\leq \int_0^1 \left( \int_0^1 |f(x_1,x_2)|^2 \mathrm{d}x_1 \int_0^1 |v_{\xi_1}(x_1)|^2 \mathrm{d}x_1\right) \mathrm{d}x_2 \\
	&=\|f\|_{L^2(\Omega)}^2. 
\end{align*}
We also have that $\mathcal{F}_1f(\xi_1,\cdot)$ satisfies the boundary condition of the operator $L_2$. Indeed,
$$
h_2 \mathcal{F}_1f(\xi_1,0) = \int_0^1 h_2f(x_1,0) v_{\xi_1}(x_1) \mathrm{d}x_1 = \int_0^1 f(x_1,1) v_{\xi_1}(x_1) \mathrm{d}x_1 =\mathcal{F}_1f(\xi_1,1). 
$$
	Notice that for any $k \in \mathbb{N}$ we have that $L_2^k \mathcal{F}_1f(\xi_1,\cdot) = (-i)^k \mathcal{F}_1 \frac{\partial^k f}{\partial x_2^k}(\xi_1,\cdot)$, and since $f\in H^\infty_{L_h}(\Omega)$
we conclude that $\mathcal{F}_1f(\xi_1,\cdot) \in H^{\infty}_{L_2}((0,1))$.

Let $w \in H^{-\infty}_{L_h}(\Omega)$. So there exists $C>0$ and $k\in \mathbb{N}$ such that
$$
|\jp{w,\psi}| \leq C\|\psi\|_{H^k_L}, \quad \psi \in H^\infty_{{L_h}^*}(\Omega),
$$
where $\|\psi\|_{H^k_{{L_h}^*}}= \max\limits_{j\leq k}\|({L_h}^*)^j\psi\|_{L^2(\Omega)}$. Hence, for all $\varphi \in H^{\infty}_{L_2}((0,1))$ we have
$$
\left|\jp{\mathcal{F}w(\xi_1,\cdot), \varphi}\right| = |\jp{w, v_{-\xi_1}\times \varphi}| \leq C\|v_{-\xi}\times \varphi\|_{H^k_{{L_h}^*}} \leq C'\|\varphi\|_{H^{2k}_{L^*_2}},
$$
for some $C'>0$, which implies that $\mathcal{F}_1w(\xi_1,\cdot)\in H^{-\infty}_{L_2}((0,1))$. The proofs for the partial $L$-Fourier transform with respect to the second variable is analogous, as well as for the partial $L^*_h$--Fourier coefficient and will be omitted.
\end{proof}
\begin{theorem}\label{Partial-Func}
	We have that $f \in H_{L_h}^\infty(\Omega)$ if and only if $\mathcal{F}_2f(\cdot, \xi_2)\in H^\infty_{L_1}((0,1))$, for all $\xi_2 \in \mathbb{Z}$, and for every $k\in \mathbb{N}$ and $N>0$, there exists $C_{kN}>0$ such that
	$$
\left\|\frac{\mathrm{d^k}}{\mathrm{d}x_1^k}\mathcal{F}_2f(x_1, \xi_2)  \right\|_{L^2((0,1))} \leq C_{kN}\jp{\xi_2}^{-N}, \quad \forall \xi_2 \in \mathbb{Z}.
	$$
\end{theorem}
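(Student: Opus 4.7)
The plan is to prove the two implications by leveraging the partial analogue of Proposition \ref{derivative-Fourier}, together with the characterization of $H^\infty_{L_h}(\overline{\Omega})$ as the $L_h$-Fourier preimage of $\mathcal{S}(\mathbb{Z}^2)$. The key identity is
$$\mathcal{F}_2(\partial_{x_2} g)(x_1, \xi_2) = (\log h_2 + 2\pi i\xi_2)\, \mathcal{F}_2 g(x_1, \xi_2),$$
proved exactly as Proposition \ref{derivative-Fourier} by integration by parts in $x_2$, provided $g$ satisfies $h_2 g|_{x_2=0} = g|_{x_2=1}$. Iterating this identity $N$ times requires the higher-order boundary identities $h_2 \partial_{x_2}^j g|_{x_2=0} = \partial_{x_2}^j g|_{x_2=1}$ for $j = 0, 1, \ldots, N-1$. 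For $g = \partial_{x_1}^k f$ with $f \in H^\infty_{L_h}(\overline{\Omega})$, these follow by induction from the fact that $L_h^m f = (\partial_{x_1}^2 + \partial_{x_2}^2)^m f$ satisfies \eqref{BC} for every $m$: differentiating \eqref{BC} in $x_1$ passes $\partial_{x_1}^2$ across and subtracting isolates the pure $\partial_{x_2}^{2m}$-boundary identity, and an analogous argument for $L_h^m(\partial_{x_2}f)$ covers the odd orders. The analogue on the $x_1$-variable is used symmetrically.

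For the forward direction ($\Rightarrow$), let $f\in H^\infty_{L_h}(\overline{\Omega})$. The previous proposition already yields $\mathcal{F}_2 f(\cdot,\xi_2)\in H^\infty_{L_1}((0,1))$. Interchanging $d/dx_1$ with the integral gives $\frac{d^k}{dx_1^k}\mathcal{F}_2 f(x_1,\xi_2) = \mathcal{F}_2(\partial_{x_1}^k f)(x_1,\xi_2)$, and then iterating the key identity in $x_2$ produces
$$\frac{d^k}{dx_1^k}\mathcal{F}_2 f(x_1,\xi_2) = (\log h_2 + 2\pi i \xi_2)^{-N}\,\mathcal{F}_2\bigl(\partial_{x_1}^k\partial_{x_2}^N f\bigr)(x_1,\xi_2),$$
whenever $\log h_2 + 2\pi i \xi_2 \neq 0$. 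Taking $L^2_{x_1}$-norms and applying the trivial bound $\|\mathcal{F}_2 g(\cdot,\xi_2)\|_{L^2((0,1))} \leq C_{h_2}\|g\|_{L^2(\Omega)}$ (which follows from Cauchy--Schwarz and the boundedness of $v_{\xi_2}$ in $x_2$) yields the decay with rate $|\log h_2 + 2\pi i \xi_2|^{-N}$. Since $|\log h_2 + 2\pi i \xi_2| \asymp \jp{\xi_2}$ outside the at most one exceptional $\xi_2$ at which it vanishes, and since that exception is handled by the direct bound $\|\mathcal{F}_2(\partial_{x_1}^k f)(\cdot,\xi_2)\|_{L^2} \leq C\|\partial_{x_1}^k f\|_{L^2(\Omega)}$, one absorbs into a larger constant to obtain the required estimate $C_{k,N}\jp{\xi_2}^{-N}$.

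For the converse ($\Leftarrow$), I use the formula $\widehat{f}(\xi_1,\xi_2) = \int_0^1 \mathcal{F}_2 f(x_1,\xi_2)\,\overline{v_{\xi_1}(x_1)}\,dx_1$ recorded in Section \ref{nonharmonic}. Since by hypothesis each $\mathcal{F}_2 f(\cdot,\xi_2)$ lies in $H^\infty_{L_1}((0,1))$ and so satisfies the one-dimensional condition $h_1 g(0) = g(1)$ together with all its derivatives, $k$-fold integration by parts in $x_1$ produces
$$\widehat{f}(\xi_1,\xi_2) = (\log h_1 + 2\pi i \xi_1)^{-k}\int_0^1 \tfrac{d^k}{dx_1^k}\mathcal{F}_2 f(x_1,\xi_2)\,\overline{v_{\xi_1}(x_1)}\,dx_1,$$
and Cauchy--Schwarz combined with the standing hypothesis yields $|\widehat{f}(\xi_1,\xi_2)| \leq C_{k,N}\jp{\xi_1}^{-k}\jp{\xi_2}^{-N}$ (with finitely many exceptional $\xi_1$'s handled as in the forward direction). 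Invoking the equivalence $\jp{\xi} \asymp \jp{\xi_1}+\jp{\xi_2}$ from \eqref{jp-bracket} and choosing $k = N = M$ converts this into $|\widehat{f}(\xi_1,\xi_2)| \leq C_M \jp{\xi}^{-M}$ for arbitrary $M$, so $\widehat{f}\in \mathcal{S}(\mathbb{Z}^2)$ and therefore $f\in H^\infty_{L_h}(\overline{\Omega})$. The main obstacle of the whole argument, as highlighted in the first paragraph, is justifying the higher-order $x_2$-boundary identities needed to iterate the key integration by parts; once that inductive bookkeeping is in place, the remaining estimates are routine.
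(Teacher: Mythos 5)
Your proof is correct and follows essentially the same route as the paper's: both directions rest on converting $x_2$-derivatives (resp.\ $x_1$-derivatives) of $f$ into powers of $\log h_2+2\pi i\xi_2$ (resp.\ $\log h_1+2\pi i\xi_1$) acting on the partial Fourier coefficients, and then combining the two decay rates through the equivalence $\jp{\xi}\asymp\jp{\xi_1}+\jp{\xi_2}$. The only cosmetic differences are that you use Cauchy--Schwarz where the paper invokes the Bari inequalities of Lemma \ref{plancherel}, and that you make explicit the induction on the higher-order boundary identities in $x_2$ which the paper leaves implicit.
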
 
\begin{proof}
	$(\impliedby)$ By the Plancherel inequality given in Lemma \ref{plancherel} and by the relation \eqref{transform-partial}, for each fixed $\xi_2 \in \mathbb{Z}$ we have
	$$
	\sum_{\xi_1\in\mathbb{Z}} |\lambda_{\xi_1}^k\widehat{f}(\xi_1,\xi_2)|^2 = \sum_{\xi_1\in\mathbb{Z}} \left|\widehat{\frac{\partial^k f}{\partial x_1^k}}(\xi_1,\xi_2)\right|^2 \leq M^2\left\|\frac{\mathrm{d}^k}{\mathrm{d}x_1^k}\mathcal{F}_2f(\cdot, \xi_2)\right\|_{L^2((0,1))}^2 \leq C_{kN}^2 \jp{\xi_2}^{-2N}
	$$
	In particular, for $k=N$, we have for some $C_N>0$ that
	$$
	|\lambda_{\xi_1}^N\widehat{f}(\xi)| \leq C_N \jp{\xi_2}^{-N},
	$$
	for all $\xi_1 \in \mathbb{Z}$. Hence, for $\lambda_{\xi_1}\neq0$ we obtain
	$$
		|\widehat{f}(\xi)| \leq C_N (\jp{\xi_2}\jp{\xi_1})^{-N} \leq C_N2^N (\jp{\xi_2}+\jp{\xi_1})^{-N} \leq C_N'\jp{\xi}^{-N},
	$$
	which implies that $\widehat{f} \in \mathcal{S}(\mathbb{Z}^2)$ and then $f \in H^\infty_{L_h}(\Omega)$.
	
	$(\implies)$ Since $f\in H^\infty_{L_h}(\Omega)$, for any $k,N \in \mathbb{N}$ we have $g=\dfrac{\partial^{k+N}}{\partial x_1^k \partial x_2^N}f \in L^2(\Omega)$. Thus, for each $\xi_2\in \mathbb{Z}$ we have  $\mathcal{F}_2 g(\cdot,\xi_2)\in L^2((0,1))$, and
	$$
	\left\|\lambda_{\xi_2}^N\frac{\mathrm{d^k}}{\mathrm{d}x_1^k}\mathcal{F}_2f(\cdot,\xi_2)\right\|_{L^2((0,1))}^2 = \| \mathcal{F}_2g(\cdot,\xi_2)\|_{L^2((0,1))}^2 \leq m^2  \sum_{\xi_1\in \mathbb{Z}} |\widehat{g}(\xi_1,\xi_2)|^2 \leq  m^2\sum_{\xi\in \mathbb{Z}^2} |\widehat{g}(\xi)|^2\leq  C\|g\|_{L^2(\Omega)}^2
	$$
	Therefore, for any $k,N \in \mathbb{N}$, there exists $C_{kN}>0$ such that
	$$
	\left\|\frac{\mathrm{d^k}}{\mathrm{d}x_1^k}\mathcal{F}_2f(x_1, \xi_2)  \right\|_{L^2((0,1))} \leq C_{kN}\jp{\xi_2}^{-N},
	$$
	for all $\xi_2 \in \mathbb{Z}$ and the proof is complete.
\end{proof}
\begin{theorem}
	We have that $w \in H_{L_h}^{-\infty}(\Omega)$ if and only if $\mathcal{F}_2w(\cdot, \xi_2)\in H^{-\infty}_{L_1}((0,1))$, for all $\xi_2 \in \mathbb{Z}$, and there exist $K\in \mathbb{N}$ and $C>0$ such that
	$$
	\left| 	\jp{\mathcal{F}_2w(\cdot,\xi_2), \varphi} \right| \leq C p_K(\varphi)\jp{\xi_2}^{K},
	$$
	for all $\varphi \in H^\infty_{L^*_1}((0.1))$, where $p_K(\varphi) := \sum\limits_{\beta \leq K} \left\|\frac{\mathrm{d}^\beta\varphi}{\mathrm{d}x^\beta}\right\|_{L^2((0,1))}$.
\end{theorem}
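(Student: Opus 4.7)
The plan is to mirror the argument for Theorem \ref{Partial-Func} by using tensor products $\varphi \times v_{-\xi_2}$ as test functions and exploiting the fact that $L_h^* = \partial_{x_1}^2 + \partial_{x_2}^2$ acts on such a tensor product with $\partial_{x_2}^2 v_{-\xi_2} = (\log h_2 + 2\pi i \xi_2)^2 v_{-\xi_2}$. For the forward direction, the hypothesis $w \in H^{-\infty}_{L_h}(\Omega)$ yields constants $C_0 > 0$ and $k \in \mathbb{N}_0$ with $|\jp{w,\psi}| \leq C_0\|\psi\|_{H^k_{L_h^*}}$ for all $\psi \in H^\infty_{L_h^*}(\Omega)$. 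Specialising to $\psi = \varphi \times v_{-\xi_2}$ and expanding $(\partial_{x_1}^2 + \partial_{x_2}^2)^j$ by the binomial theorem produces, after using $|\log h_2 + 2\pi i\xi_2| \lesssim \jp{\xi_2}$ and the uniform boundedness of $\|v_{-\xi_2}\|_{L^2((0,1))}$ in $\xi_2$, an estimate of the form $\|\varphi \times v_{-\xi_2}\|_{H^k_{L_h^*}} \lesssim \jp{\xi_2}^{2k}\, p_{2k}(\varphi)$. Combining this with the continuity of $w$ and the identity $\jp{\mathcal{F}_2 w(\cdot,\xi_2),\varphi} = \jp{w,\varphi \times v_{-\xi_2}}$ yields the claimed bound with $K = 2k$, and simultaneously shows that each $\mathcal{F}_2 w(\cdot,\xi_2)$ is an $L_1$-distribution.

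For the backward direction, the partial Fourier inversion formula (spelled out earlier for $\mathcal{F}_1$, with the analogue holding for $\mathcal{F}_2$) gives the representation $\jp{w,\psi} = \sum_{\xi_2 \in \mathbb{Z}}\jp{\mathcal{F}_2 w(\cdot,\xi_2), \mathcal{F}_2^*\psi(\cdot,-\xi_2)}$ for $\psi \in H^\infty_{L_h^*}(\Omega)$. The hypothesis bounds each summand by $C\, p_K(\mathcal{F}_2^*\psi(\cdot,-\xi_2))\,\jp{\xi_2}^K$, while the $L_h^*$-analog of Theorem \ref{Partial-Func} applied to $\psi$ furnishes, for every $\beta \leq K$ and every $N > 0$, constants $C_{\beta,N}$ with $\|\partial_{x_1}^\beta \mathcal{F}_2^*\psi(\cdot,-\xi_2)\|_{L^2((0,1))} \leq C_{\beta,N}\jp{\xi_2}^{-N}$. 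Taking $N = K + 2$ renders the series absolutely convergent, defining the linear functional $w$ and exhibiting the prescribed family as its partial Fourier coefficients.

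The main obstacle is the quantitative step: to obtain $w \in H^{-\infty}_{L_h}(\Omega)$ rather than merely a linear functional defined pointwise in $\psi$, the constants $C_{\beta,N}$ must be controlled by a fixed Sobolev-type seminorm of $\psi$. Inspecting the proof of Theorem \ref{Partial-Func} shows that these constants arise through $L^2(\Omega)$-norms of mixed derivatives $\partial_{x_1}^\beta \partial_{x_2}^N \psi$, which in turn are dominated up to combinatorial factors by $\|\psi\|_{H^n_{L_h^*}}$ for some $n = n(K)$ that can be read off from the expansion of $(L_h^*)^n$. Inserting this dependence yields the continuity bound $|\jp{w,\psi}| \leq C'\|\psi\|_{H^n_{L_h^*}}$, from which membership $w \in H^{-\infty}_{L_h}(\Omega)$ follows, completing the equivalence.
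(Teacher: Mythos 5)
Your argument reaches the right conclusions but travels a genuinely different road in both directions, and the two halves are not equally successful. For the forward implication your route is clean and arguably more direct than the paper's: you apply the continuity estimate $|\jp{w,\psi}|\leq C_0\|\psi\|_{H^k_{L_h^*}}$ to the tensor product $\psi=\varphi\times v_{-\xi_2}$ and expand $(L_h^*)^j=(\partial_{x_1}^2+\partial_{x_2}^2)^j$ to get $\|\varphi\times v_{-\xi_2}\|_{H^k_{L_h^*}}\lesssim\jp{\xi_2}^{2k}p_{2k}(\varphi)$; this is exactly the technique the paper already uses in the preceding proposition (showing $\mathcal{F}_1w(\xi_1,\cdot)\in H^{-\infty}_{L_2}$), upgraded by tracking the $\xi_2$-dependence. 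The paper instead expands $w$ in its full Fourier series, kills one sum by biorthogonality, and controls $\sum_{\xi_1}\jp{\xi_1}^K|\widehat{\varphi}_*(\xi_1)|$ by Cauchy--Schwarz against $\sum\jp{\xi_1}^{-2}$ and Lemma \ref{plancherel}. Both are valid; yours buys a shorter computation at the price of a larger exponent $K=2k$, which is immaterial here.

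For the converse your approach is where the real divergence — and the remaining work — lies. The paper's argument is two lines: test the hypothesis against $\varphi=v_{-\xi_1}$, note $p_K(v_{-\xi_1})\lesssim\jp{\xi_1}^K$, conclude $|\widehat{w}(\xi)|\lesssim\jp{\xi_1}^K\jp{\xi_2}^K\lesssim\jp{\xi}^{2K}$, and invoke the isomorphism $H^{-\infty}_{L_h}(\Omega)\simeq\mathcal{S}'(\mathbb{Z}^2)$. You instead reconstruct $w$ by summing the partial Fourier series and try to prove continuity of the resulting functional directly. This can be made to work, but two things need attention. First, the step you yourself flag as ``the main obstacle'' — making the constants $C_{\beta,N}$ depend on $\psi$ only through a fixed seminorm $\|\psi\|_{H^n_{L_h^*}}$ — is only sketched; it does go through (the constants are $\|\partial_{x_1}^\beta\partial_{x_2}^N\psi\|_{L^2(\Omega)}$, which are controlled via Lemma \ref{plancherel} and the equivalence $\jp{\xi}\asymp\jp{\xi_1}+\jp{\xi_2}$), but as written it is a promissory note rather than a proof. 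Second, there is a mild circularity: the representation $\jp{w,\psi}=\sum_{\xi_2}\jp{\mathcal{F}_2w(\cdot,\xi_2),\mathcal{F}_2^*\psi(\cdot,-\xi_2)}$ is justified in the paper only for $w$ already known to lie in $H^{-\infty}_{L_h}(\Omega)$, since interchanging $w$ with the series requires the very continuity you are trying to establish; you should either define the candidate functional by this sum and then identify it with $w$, or simply adopt the paper's shortcut of reading off $\widehat{w}(\xi)$ from the test functions $v_{-\xi_1}$, which sidesteps the whole issue.
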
 
\begin{proof}
$(\impliedby)$ For $\xi_1\in\mathbb{Z}$, take $\varphi = v_{-\xi_1}$. Hence,
$$
|\widehat{w}(\xi)| = \jp{\mathcal{F}_2w(\cdot, \xi_2), v_{-\xi_1}} \leq Cp_K(v_{-\xi_1})\jp{\xi_2}^K.
$$
Notice that
$$
p_K(v_{-\xi_1}) =  \sum\limits_{\beta \leq K} \left\|\frac{\mathrm{d}^\beta v_{-\xi_1}}{\mathrm{d}x^\beta}\right\|_{L^2((0,1))} =  \sum\limits_{\beta \leq K}|\lambda_{-\xi_1}|^\beta \left\| v_{-\xi_1}\right\|_{L^2((0,1))} \leq C\jp{\xi_1}^K
$$
Therefore, there exist $C,K>0$ such that $|\widehat{w}(\xi)|\leq C\jp{\xi}^K$, for all $\xi \in \mathbb{Z}^2$, which implies that $\widehat{w} \in \mathcal{S}'(\mathbb{Z}^2)$, and consequently $w \in H^{-\infty}_{L_h}(\Omega)$.

$(\implies)$ Since $w \in H^{-\infty}_{L_h}(\Omega)$, there exist $C,K>0$ such that $|\widehat{w}(\xi)| \leq C\jp{\xi}^K$, for all $\xi \in \mathbb{Z}^2$ and we may write
$$
w=\sum_{\xi \in \mathbb{Z}^2}\widehat{w}(\xi)u_{\xi} = \sum_{\xi_1,\xi_2 \in \mathbb{Z}}\widehat{w}(\xi_1,\xi_2)u_{\xi_1}u_{\xi_2}
$$

For $\varphi \in H^{\infty}_{L^*_1}((0,1))$ we have
$$
	\left| 	\jp{\mathcal{F}_2w(\cdot,\xi_2), \varphi} \right| = \left| \jp{w,\varphi \times v_{-\xi_2}}\right| =\left| \sum_{\xi_1,\eta \in \mathbb{Z}}\widehat{w}(\xi_1,\eta)\jp{u_{\xi_1}, \varphi} \jp{u_{\eta},v_{-\xi_2}} \right| 
$$
By the biorthogonality of the systems $\{u_\xi\}$ and $\{v_\xi\}$, we have that $\jp{u_{\eta},v_{-\xi_2}}=(u_{\eta},v_{\xi_2})_{L^2((0,1))}=\delta_{\eta \xi_2}$, there $\delta_{mn}$ is the Kronecker's delta. Moreover, we have $\jp{u_{\xi_1},\varphi} = \widehat{\varphi}_*(-\xi_1)$. Thus,
$$
\left| 	\jp{\mathcal{F}_2w(\cdot,\xi_2), \varphi} \right| \leq \sum_{\xi_1\in\mathbb{Z}}|\widehat{w}(\xi_1,\xi_2)||\widehat{\varphi}_* (-\xi_1)| \leq C\sum_{\xi_1\in \mathbb{Z}} \jp{\xi}^K|\widehat{\varphi}_* (-\xi_1)| \leq C\jp{\xi_2}^K\sum_{\xi_1 \in \mathbb{Z}}\jp{\xi_1}^K|\widehat{\varphi}_* (\xi_1)|,
$$
where the last inequality follows from the fact that there exists $C>0$ such that $\jp{\xi} \leq C\jp{\xi_1}\jp{\xi_2}$ and $\jp{
-\xi_1}=\jp{\xi_1}$. Notice that
$$
\jp{\xi_1}^K|\widehat{\varphi}_*(\xi_1) | =\jp{\xi_1}^{-1}\jp{\xi_1}^{K+1}|\widehat{\varphi}_*(\xi_1) |\leq C \jp{\xi_1}^{-1}|\lambda_{\xi_1}|^{K+1}|\widehat{\varphi}_*(\xi_1) | =C\jp{\xi_1}^{-1}\left|\widehat{\tfrac{\mathrm{d}^{K+1}\varphi}{\mathrm{d}x_1^{K+1}}}_*(\xi_1) \right|
$$
Using the fact that $\sum\limits_{\xi_1\in \mathbb{Z}} \jp{\xi_1}^{-2} < \infty$, we obtain
$$
\sum_{\xi_1 \in \mathbb{Z}}\jp{\xi_1}^K|\widehat{\varphi}_* (\xi_1)| \leq C \sum_{\xi_1 \in \mathbb{Z}} \jp{\xi_1}^{-1}\left|\widehat{\tfrac{\mathrm{d}^{K+1}\varphi}{\mathrm{d}x_1^{K+1}}}_*(\xi_1) \right| \leq \left(\sum_{\xi_1 \in \mathbb{Z}} \jp{\xi_1}^{-2}\right)^{1/2} \left(\sum_{\xi_1 \in \mathbb{Z}} \left|\widehat{\tfrac{\mathrm{d}^{K+1}\varphi}{\mathrm{d}x_1^{K+1}}}_*(\xi_1) \right|^2\right)^{1/2}
$$
By the Plancherel inequality, we conclude that 
$$
	\left| 	\jp{\mathcal{F}_2w(\cdot,\xi_2), \varphi} \right| \leq C\jp{\xi_2}^K\left\|\tfrac{\mathrm{d}^{K+1}\varphi}{\mathrm{d}x_1^{K+1}} \right\|_{L^2((0,1))} \leq Cp_{K+1}(\varphi)\jp{\xi_2}^{K+1},
$$
and the proof is completed.
\end{proof}
}
Let $A:[0,1] \to \mathbb{C}$ given by
$$
A(x_1):= \int_0^{x_1} a(s) \mathrm{d} s - x_1a_0,
$$ 
where $a_0=\int\limits_{0}^1 a(s) \mathrm{d} s$. Notice that $A \in C^\infty(\mathbb{T}^1)$ and $A'(x_1) = a(x_1)-a_0$. Define the following operator
$$
\Psi_a w (x_1,x_2):= \sum_{\xi_2 \in \mathbb{Z}} e^{(\log h_2 + 2\pi i \xi_2)A(x_1)}\mathcal{F}_2w(x_1,\xi_2)u_{\xi_2}(x_2).
$$
Notice that $\mathcal{F}_2(\Psi_a w) (x_1,\xi_2) =  e^{(\log h_2 + 2\pi i \xi_2)A(x_1)}\mathcal{F}_2w(x_1,\xi_2).$

\begin{proposition}
	The operator $\Psi_a$ is an automorphism of $H^\infty_{L_h}(\Omega)$ and of $H^{-\infty}_{L_h}(\Omega)$.
\end{proposition}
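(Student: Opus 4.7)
The plan is to split the proof into three parts: continuity on $H^\infty_{L_h}(\Omega)$, continuity on $H^{-\infty}_{L_h}(\Omega)$, and invertibility. Throughout I will work on the partial $L_h$-Fourier side with respect to $x_2$, where $\Psi_a$ becomes the pointwise multiplication operator
$$\mathcal{F}_2 w(x_1, \xi_2) \mapsto M_{\xi_2}(x_1)\,\mathcal{F}_2 w(x_1, \xi_2), \qquad M_{\xi_2}(x_1) := e^{(\log h_2 + 2\pi i \xi_2) A(x_1)},$$
and I will invoke the characterizations of $H^\infty_{L_h}(\Omega)$ and $H^{-\infty}_{L_h}(\Omega)$ in terms of these partial coefficients.

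The crucial fact about $M_{\xi_2}$ is that $A(0) = A(1) = 0$ (from subtracting $x_1 a_0$) and that $A'(x_1) = a(x_1) - a_0$ is the restriction of a smooth periodic function, so $A^{(m)}(0) = A^{(m)}(1)$ for every $m \geq 0$. By Faà di Bruno, this propagates to $M_{\xi_2}^{(m)}(0) = M_{\xi_2}^{(m)}(1)$ for all $m$, and simultaneously $\|M_{\xi_2}^{(m)}\|_{L^\infty([0,1])} \leq C_m \jp{\xi_2}^m$ (with $|M_{\xi_2}| = h_2^{A}$ uniformly bounded in $\xi_2$). The periodicity of the derivatives of $M_{\xi_2}$ is exactly what is needed, via Leibniz, to verify that multiplication by $M_{\xi_2}$ preserves the boundary condition $h_1 f^{(j)}(0) = f^{(j)}(1)$ of $L_1$ at every order, and similarly the dual condition for $L_1^*$.

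For $f \in H^\infty_{L_h}(\Omega)$, Theorem \ref{Partial-Func} supplies arbitrary-order polynomial decay in $\xi_2$ of the quantities $\|\partial_1^k \mathcal{F}_2 f(\cdot, \xi_2)\|_{L^2((0,1))}$. Since $\mathcal{F}_2(\Psi_a f)(\cdot, \xi_2) = M_{\xi_2} \mathcal{F}_2 f(\cdot, \xi_2)$, Leibniz combined with the bound $\|M_{\xi_2}^{(m)}\|_\infty \leq C_m \jp{\xi_2}^m$ absorbs the polynomial factor into the rapid decay, so $\Psi_a f \in H^\infty_{L_h}(\Omega)$ by the converse direction of Theorem \ref{Partial-Func}. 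For $w \in H^{-\infty}_{L_h}(\Omega)$, I define $\jp{\mathcal{F}_2(\Psi_a w)(\cdot, \xi_2), \varphi} := \jp{\mathcal{F}_2 w(\cdot, \xi_2), M_{\xi_2}\varphi}$; the same Leibniz estimate yields $p_K(M_{\xi_2}\varphi) \leq C_K \jp{\xi_2}^K p_K(\varphi)$, which upgrades the moderate-growth estimate for $\mathcal{F}_2 w$ into one for $\mathcal{F}_2(\Psi_a w)$, placing $\Psi_a w$ back in $H^{-\infty}_{L_h}(\Omega)$ by the distributional analogue of the partial Fourier characterization.

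Invertibility is exhibited explicitly: replacing $a$ by $-a$ gives $A \mapsto -A$ and $M_{\xi_2} \mapsto M_{\xi_2}^{-1}$, so $\Psi_a \Psi_{-a} = \Psi_{-a} \Psi_a = \mathrm{id}$ fibrewise in $\xi_2$ and hence on each of the two spaces. The only delicate step is confirming that multiplication by $M_{\xi_2}$ preserves the $L_1$ and $L_1^*$ boundary conditions at every derivative order; this rests entirely on the normalization by the mean $a_0$, which forces $A(0) = A(1) = 0$, together with the smooth periodicity of $A$. Without this normalization the multiplier would pick up nontrivial boundary jumps at each order and the construction would fail, which is exactly why the shift by $x_1 a_0$ is built into the definition of $A$.
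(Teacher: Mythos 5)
Your proof is correct and follows essentially the same route as the paper: pass to the partial Fourier coefficients in $x_2$, bound the derivatives of the multiplier $e^{(\log h_2+2\pi i\xi_2)A(x_1)}$ by $C_m\jp{\xi_2}^m$ via Fa\`a di Bruno, combine with Leibniz and Theorem \ref{Partial-Func}, and invert via $\Psi_{-a}$. You additionally supply two details the paper's proof glosses over, namely the check that multiplication by $M_{\xi_2}$ preserves the $L_1$ and $L_1^*$ boundary conditions at every order (which indeed rests on $A(0)=A(1)=0$ and the periodicity of $a$), and the explicit duality argument for the distributional case, which the paper omits as ``similar.''
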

\begin{proof}
Notice that $(\Psi_a)^{-1} = \Psi_{-a}$, so to complete the proof let us show that $\Psi_a H^\infty_{L_h}(\Omega) \subseteq H^\infty_{L_h}(\Omega)$ and $\Psi_a H^{-\infty}_{L_h}(\Omega) \subseteq H^{-\infty}_{L_h}(\Omega)$. Let $w \in H^\infty_{L_h}(\Omega)$. Let us show that $\mathcal{F}_2(\Psi_a w) (\cdot ,\xi_2)$ satisfies the hypothesis of Theorem \ref{Partial-Func}. For $k \in \mathbb{N}$ fixed we have
\begin{align*}
\dfrac{\mathrm{d}^k}{\mathrm{d}x_1^k}\mathcal{F}_2(\Psi_a w) (x_1 ,\xi_2) &= \dfrac{\mathrm{d}^k}{\mathrm{d}x_1^k}\left(e^{(\log h_2 + 2\pi i \xi_2)A(x_1)}\mathcal{F}_2w(x_1,\xi_2)\right) \\
&= \sum_{\ell=0}^k \binom{k}{\ell} \left(\dfrac{\mathrm{d}^\ell}{\mathrm{d}x_1^\ell}e^{(\log h_2 + 2\pi i \xi_2)A(x_1)}\right)\left(\dfrac{\mathrm{d}^{k-\ell}}{\mathrm{d}x_1^{k-\ell}}\mathcal{F}_2w(x_1,\xi_2)\right).
\end{align*}
By Faà di Bruno's formula, we have
$$
 \dfrac{\mathrm{d}^\ell}{\mathrm{d}x_1^\ell}\left(e^{(\log h_2 + 2\pi i \xi_2)A(x_1)}\right)= \sum_{\gamma \in \Delta(\ell)} \frac{\ell!}{\gamma!} (\log h_2 +2\pi i \xi_2)^{|\gamma|} e^{(\log h_2 + 2\pi i \xi_2)A(x_1)} \prod_{j=1}^\ell \left( \frac{\tfrac{\mathrm{d}^jA(x_1)}{\mathrm{d}x_1^j}}{j!}\right)^{\gamma_j},
$$
where $\Delta(\ell):= \{\gamma \in \mathbb{N}_0^\ell; \sum\limits_{j=1}^\ell j\gamma_j=\ell \}$. Since $A \in C^\infty(\mathbb{T}^1)$, there exists $C_\ell>0$ such that
$$
\left| \dfrac{\mathrm{d}^\ell}{\mathrm{d}x_1^\ell}\left(e^{(\log h_2 + 2\pi i \xi_2)A(x_1)}\right) \right| \leq C_\ell \jp{\xi_2}^\ell,
$$ for all $x_1 \in [0,1]$. Applying Theorem \ref{Partial-Func} for $\mathcal{F}_2w(\cdot, \xi_2)$, for any $N>0$ we obtain $C_{kN}$ such that
\begin{align*}
\left\|\dfrac{\mathrm{d}^k}{\mathrm{d}x_1^k}\mathcal{F}_2(\Psi_a w) (\cdot ,\xi_2)\right\|_{L^2((0,1))} &\leq \sum_{\ell=0}^k\binom{k}{\ell}C_\ell \jp{\xi_2}^\ell \left\|\dfrac{\mathrm{d}^{k-\ell}}{\mathrm{d}x_1^{k-\ell}}\mathcal{F}_2w(\cdot,\xi_2) \right\|_{L^2((0,1))}\\
&\leq \sum_{\ell=0}^k\binom{k}{\ell}C_\ell \jp{\xi_2}^\ell C_{N\ell}\jp{\xi_2}^{-(N+\ell)}\\
& \leq C_{kN}\jp{\xi_2}^{-N}
\end{align*}
Therefore $\Psi_a w \in H^{\infty}_{L_h}(\Omega)$. Similarly, we obtain that $\Psi_a$ is an automorphism of $H^{-\infty}_{L_h}(\Omega)$  and the proof is omitted.
\end{proof}
Consider the operator $P_0$ given by
$$
P_0:= \frac{\partial }{ \partial x_1}+a_0\frac{\partial}{\partial x_2}.
$$

\begin{theorem}\label{normal}
We have that
$$
\Psi_a \circ P = P_0 \circ \Psi_a.
$$
Moreover, the operator $P$ is globally $L_h$--hypoelliptic if and only if the operator $P_0$ is globally $L_h$--hypoelliptic.
\end{theorem}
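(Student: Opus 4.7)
The plan is to establish the intertwining identity $\Psi_a P = P_0 \Psi_a$ by passing to the partial Fourier side in the $x_2$ variable, where the calculation reduces to the ODE trick $A'(x_1) + a_0 = a(x_1)$, and then to transfer global hypoellipticity via the automorphism property of $\Psi_a$.

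First I would compute $\mathcal{F}_2(Pw)(x_1,\xi_2)$ for $w \in H^\infty_{L_h}(\Omega)$. Since $\mathcal{F}_2$ obviously commutes with $\partial_{x_1}$, and since the one-dimensional analogue of Proposition \ref{derivative-Fourier} (proved by the same integration by parts in $x_2$, using that the boundary condition \eqref{BC} kills the boundary term against $v_{\xi_2}$) yields $\mathcal{F}_2(\partial_{x_2} w)(x_1,\xi_2) = (\log h_2 + 2\pi i \xi_2)\mathcal{F}_2 w(x_1,\xi_2)$, I get
$$\mathcal{F}_2(Pw)(x_1,\xi_2) = \partial_{x_1}\mathcal{F}_2 w(x_1,\xi_2) + (\log h_2 + 2\pi i \xi_2)\, a(x_1)\mathcal{F}_2 w(x_1,\xi_2).$$
Multiplying by the exponential factor gives $\mathcal{F}_2(\Psi_a Pw)(x_1,\xi_2)$ directly.

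Next I would compute $\mathcal{F}_2(P_0\Psi_a w)(x_1,\xi_2)$ via the same rules. Using $\mathcal{F}_2(\Psi_a w)(x_1,\xi_2) = e^{(\log h_2 + 2\pi i\xi_2)A(x_1)}\mathcal{F}_2 w(x_1,\xi_2)$ and the Leibniz rule for $\partial_{x_1}$, three terms appear: the derivative of the exponential contributes $(\log h_2 + 2\pi i \xi_2)A'(x_1)$, the derivative of $\mathcal{F}_2 w$ gives $\partial_{x_1}\mathcal{F}_2 w$, and the $a_0$-term contributes $a_0(\log h_2 + 2\pi i \xi_2)$. Since $A'(x_1) + a_0 = a(x_1)$ by the very definition of $A$, collecting factors produces exactly
$$e^{(\log h_2 + 2\pi i \xi_2)A(x_1)}\left[\partial_{x_1}\mathcal{F}_2 w(x_1,\xi_2) + (\log h_2 + 2\pi i \xi_2)\, a(x_1)\mathcal{F}_2 w(x_1,\xi_2)\right],$$
which matches $\mathcal{F}_2(\Psi_a Pw)(x_1,\xi_2)$. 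Injectivity of the partial Fourier transform then yields $\Psi_a P w = P_0 \Psi_a w$ on $H^\infty_{L_h}(\Omega)$, and the identity extends to $H^{-\infty}_{L_h}(\Omega)$ either by the continuity of $\Psi_a$, $P$, and $P_0$ on distributions, or by dualizing against test functions in $H^\infty_{L_h^*}(\overline{\Omega})$ and running the same computation on the adjoint side.

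For the equivalence of global $L_h$-hypoellipticity, suppose $P_0$ is globally $L_h$-hypoelliptic, take $w \in H^{-\infty}_{L_h}(\Omega)$ with $Pw \in H^\infty_{L_h}(\overline{\Omega})$, and set $v := \Psi_a w$. By the intertwining identity, $P_0 v = \Psi_a(Pw) \in \Psi_a H^\infty_{L_h}(\overline{\Omega}) = H^\infty_{L_h}(\overline{\Omega})$ by the previous proposition, so hypoellipticity of $P_0$ forces $v \in H^\infty_{L_h}(\overline{\Omega})$, and consequently $w = \Psi_{-a} v \in H^\infty_{L_h}(\overline{\Omega})$. The reverse implication is symmetric via $\Psi_a^{-1} = \Psi_{-a}$. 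The only real subtlety I foresee is the distributional extension of the intertwining identity; the algebraic computation on smooth functions is routine, but one must check that the individual terms ($\partial_{x_2}$ of an $L_h$-distribution, the partial Fourier coefficients, and the exponential multiplication) all behave continuously in the $H^{-\infty}_{L_h}(\Omega)$ topology so that the identity makes sense globally.
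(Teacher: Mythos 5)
Your proposal is correct and follows essentially the same route as the paper: the intertwining identity is verified on the partial Fourier side in $x_2$ via the Leibniz rule and the relation $A'(x_1)=a(x_1)-a_0$, and global $L_h$--hypoellipticity is transferred using that $\Psi_a$ is an automorphism with inverse $\Psi_{-a}$. Your explicit remark about extending the identity from $H^\infty_{L_h}(\overline{\Omega})$ to $H^{-\infty}_{L_h}(\Omega)$ is a point the paper passes over silently, but it is a refinement rather than a different argument.
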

\begin{proof}
	For every $w\in H^\infty_L(\Omega)$ and $\xi_2\in\mathbb{Z}$ we have
	$$
	\mathcal{F}_2(\Psi_a (Pw))(x_1,\xi_2) = \mathcal{F}_2(P_0(\Psi_a w))(x_1, \xi_2).
	$$
	Indeed, for all $\xi_2 \in \mathbb{Z}$ we have
	\begin{align*}
	 \mathcal{F}_2(P_0(\Psi_a w))(x_1, \xi_2) &=  \mathcal{F}_2\left(\frac{\partial }{ \partial x_1}\Psi_a w+a_0\frac{\partial}{\partial x_2}\Psi_a w\right)(x_1, \xi_2)\\
	 &=\frac{\mathrm{d} }{ \mathrm{d} x_1} \mathcal{F}_2(\Psi_a w)(x_1, \xi_2)+a_0(\log h_2+2\pi i\xi_2) \mathcal{F}_2(\Psi_a w)(x_1, \xi_2)\\
	 &=\frac{\mathrm{d} }{ \mathrm{d} x_1} \left(e^{(\log h_2 + 2\pi i \xi_2)A(x_1)}\mathcal{F}_2w(x_1,\xi_2)\right) \\&+ a_0(\log h_2+2\pi i\xi_2)e^{(\log h_2 + 2\pi i \xi_2)A(x_1)}\mathcal{F}_2w(x_1,\xi_2)\\
	 &=(a(x_1)-a_0)(\log h_2+2\pi i\xi_2)e^{(\log h_2 + 2\pi i \xi_2)A(x_1)}\mathcal{F}_2w(x_1,\xi_2) \\
	 &+ e^{(\log h_2 + 2\pi i \xi_2)A(x_1)}\frac{\mathrm{d} }{ \mathrm{d} x_1}\mathcal{F}_2w(x_1,\xi_2)\\&+ a_0(\log h_2+2\pi i\xi_2)e^{(\log h_2 + 2\pi i \xi_2)A(x_1)}\mathcal{F}_2w(x_1,\xi_2)\\
	 &=e^{(\log h_2 + 2\pi i \xi_2)A(x_1)} \mathcal{F}_2 \left(\frac{\partial }{ \partial x_1} w+a(x_1)\frac{\partial}{\partial x_2} w\right)(x_1, \xi_2)\\
	 &=	\mathcal{F}_2(\Psi_a (Pw))(x_1,\xi_2).
	\end{align*}

	Suppose that $P$ is globally ${L_h}$--hypoelliptic. If $P_0w = f \in H_{L_h}^{\infty}(\Omega)$ for some $u\in H^{-\infty}_{L_h}(\Omega)$, then
	$\Psi_{-a}P_0w = \Psi_{-a}f \in H^\infty_{L_h}(\Omega)$. Since $ \Psi_{-a} \circ P_0=P \circ \Psi_{-a},$ we have
	$P(\Psi_{-a} w) \in H^\infty_{L_h}(\Omega)$ and by global ${L_h}$--hypoellipticity of $P$ we have $\Psi_{-a}w \in H_{L_h}^\infty(\Omega)$, which implies that $w\in H_{L_h}^\infty(\Omega)$ and then $P_0$ is globally ${L_h}$--hypoelliptic.
	
	Assume now that $P_0$ is globally ${L_h}$--hypoelliptic. If $Pw = f \in H_{L_h}^\infty(\Omega)$ for some $w\in H^{-\infty}_{L_h}(\Omega)$, we can write $P (\Psi_{-a} \Psi_{a} w) = f \in H_{L_h}^\infty(\Omega)$. By the fact that $P \circ \Psi_{-a} = \Psi_{-a} \circ P_0 $ we obtain $\Psi_{-a} P_0(\Psi_{a} w) = f$, that is, $P_0(\Psi_{a} w) = \Psi_{a} f \in H_{L_h}^\infty(\Omega)$. By the global ${L_h}$--hypoellipticity of $P_0$ we have that $\Psi_{a} w \in H_{L_h}^\infty(\Omega)$ and then $w \in H_{L_h}^\infty(\Omega)$.
\end{proof}
Finally, let us investigate the ${L_h}$--solvability of the operator $P$. We say that the operator $P$ is globally $L_h$--solvable if $PH^{\infty}_{L_h}=\mathbb{F}$, where
$$
\mathbb{F}=\{f \in H^{-\infty}_{L_h}(\Omega); \Psi_{a} f \in \mathbb{E}_0\},
$$ 
and $\mathbb{E}_0$ is the space of $P_0$--admissible $L_h$--distributions as in \eqref{adm}.
\begin{theorem}
	The operator $P$ is globally $L_h$--solvable if and only if $P_0$ is globally $L_h$--solvable.
\end{theorem}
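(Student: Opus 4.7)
The plan is to leverage the intertwining $\Psi_a \circ P = P_0 \circ \Psi_a$ proved in Theorem \ref{normal}, together with the fact that $\Psi_a$ is an automorphism of $H^{-\infty}_{L_h}(\Omega)$ with inverse $\Psi_{-a}$. Because the admissible class $\mathbb{F}$ is defined precisely as $\Psi_{-a}(\mathbb{E}_0)$, the map $\Psi_a$ furnishes a bijection $\mathbb{F} \leftrightarrow \mathbb{E}_0$, and the whole argument reduces to transporting solutions through this bijection.

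Before tackling the equivalence, I would observe that the inclusion $P H^{-\infty}_{L_h}(\Omega) \subseteq \mathbb{F}$ is automatic: if $f = Pw$, then $\Psi_a f = \Psi_a P w = P_0(\Psi_a w) \in P_0 H^{-\infty}_{L_h}(\Omega) \subseteq \mathbb{E}_0$, hence $f \in \mathbb{F}$. Consequently "$P$ is globally $L_h$--solvable" reduces to the inclusion $\mathbb{F} \subseteq P H^{-\infty}_{L_h}(\Omega)$, and similarly for $P_0$ and $\mathbb{E}_0$.

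For the direction "$P_0$ globally $L_h$--solvable $\Rightarrow$ $P$ globally $L_h$--solvable", given $f\in\mathbb{F}$, put $g := \Psi_a f \in \mathbb{E}_0$; by hypothesis pick $u \in H^{-\infty}_{L_h}(\Omega)$ with $P_0 u = g$, and set $w := \Psi_{-a}u$. Applying $\Psi_{-a}$ to the intertwining yields $P \Psi_{-a} = \Psi_{-a} P_0$, whence $Pw = \Psi_{-a} P_0 u = \Psi_{-a} g = f$. The converse is symmetric: assuming $P$ solvable and given $g\in\mathbb{E}_0$, set $f := \Psi_{-a} g$, so $\Psi_a f = g \in \mathbb{E}_0$ shows $f \in \mathbb{F}$; pick $w$ with $Pw = f$ and let $u := \Psi_a w$; then $P_0 u = \Psi_a P w = \Psi_a f = g$.

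The argument requires no hard analysis; the only point needing care is the verification that $\Psi_a$ exchanges the two admissible classes, which follows immediately from the definition of $\mathbb{F}$ and the bijectivity of $\Psi_a$ on $H^{-\infty}_{L_h}(\Omega)$. Thus the main \emph{conceptual} step is to recognize that the natural admissible class to attach to $P$ is not $\mathbb{E}$ from Section \ref{sect: global properties} but rather $\Psi_{-a}(\mathbb{E}_0)$, after which the proof is a short chain of identities using $P = \Psi_{-a} P_0 \Psi_a$ and $P_0 = \Psi_a P \Psi_{-a}$.
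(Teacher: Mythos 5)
Your proposal is correct and follows essentially the same route as the paper: both transport solutions through the intertwining $\Psi_a \circ P = P_0 \circ \Psi_a$ and the bijection $\Psi_a$ induces between $\mathbb{F}$ and $\mathbb{E}_0$. The only addition is your (correct) preliminary remark that the inclusion $PH^{-\infty}_{L_h}(\Omega)\subseteq\mathbb{F}$ is automatic, which the paper leaves implicit.
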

\begin{proof}
		Suppose that $P$ is globally $L_h$--solvable and let $f \in \mathbb{E}$ a $P_0$--admissible $L_h$--distribution. Hence, $\Psi_{-a}f \in \mathbb{F}$ and so there exists $w \in H^{-\infty}_{L_h}(\Omega)$ such that $Pw=\Psi_{-a}f$, which implies that $\Psi_aPw=f$. By Theorem \ref{normal}, we conclude that $P_0(\Psi_a w) = f$. Hence $P_0$ is globally $L_h$--solvable.
		
		Assume now that $P_0$ is globally $L_h$--solvable and let $f \in \mathbb{F}$, that is, $\Psi_a f \in \mathbb{E}$. Thus, there exists $w \in H^{-\infty}_{L_h}(\Omega)$ such that $P_0w=\Psi_a f$. Again by Theorem \ref{normal}, we have $f = \Psi_{-a} P_0 w = P (\Psi_{-a} w)$ and we conclude that $P$ is globally $L_h$--solvable.
\end{proof}
\section*{Acknowledgments}
The author would like to thank Niyaz Tokmagambetov for his comments and suggestions. 
The author is supported by the Methusalem programme of the Ghent University Special Research Fund (BOF) (Grant number 01M01021). 
\bibliographystyle{spmpsci}      
\bibliography{biblio}

\begin{thebibliography}{10}
\providecommand{\url}[1]{{#1}}
\providecommand{\urlprefix}{URL }
\expandafter\ifx\csname urlstyle\endcsname\relax
  \providecommand{\doi}[1]{DOI~\discretionary{}{}{}#1}\else
  \providecommand{\doi}{DOI~\discretionary{}{}{}\begingroup
  \urlstyle{rm}\Url}\fi

\bibitem{AGKM18}
de~{\'A}vila~Silva, F., Gonzalez, R.B., Kirilov, A., de~Medeira, C.: Global
  hypoellipticity for a class of pseudo-differential operators on the torus.
\newblock Journal of Fourier Analysis and Applications  (2018).
\newblock \doi{10.1007/s00041-018-09645-x}.
\newblock \urlprefix\url{https://doi.org/10.1007/s00041-018-09645-x}

\bibitem{Bar51}
Bari, N.K.: Biorthogonal systems and bases in {H}ilbert space.
\newblock Moskov. Gos. Univ. U\v{c}enye Zapiski Matematika \textbf{148(4)},
  69--107 (1951)

\bibitem{Ber99}
Bergamasco, A.P.: Remarks about global analytic hypoellipticity.
\newblock Trans. Amer. Math. Soc. \textbf{351}(10), 4113--4126 (1999).
\newblock \doi{10.1090/S0002-9947-99-02299-0}.
\newblock \urlprefix\url{https://doi.org/10.1090/S0002-9947-99-02299-0}

\bibitem{BerCorMal93}
Bergamasco, A.P., Cordaro, P.D., Malagutti, P.A.: Globally hypoelliptic systems
  of vector fields.
\newblock J. Funct. Anal. \textbf{114}(2), 267--285 (1993).
\newblock \doi{10.1006/jfan.1993.1068}.
\newblock \urlprefix\url{https://doi.org/10.1006/jfan.1993.1068}

\bibitem{BCP04}
Bergamasco, A.P., Cordaro, P.D., Petronilho, G.: Global solvability for a class
  of complex vector fields on the two-torus.
\newblock Comm. Partial Differential Equations \textbf{29}(5-6), 785--819
  (2004).
\newblock \doi{10.1081/PDE-120037332}.
\newblock \urlprefix\url{https://doi.org/10.1081/PDE-120037332}

\bibitem{CC00}
Chen, W., Chi, M.Y.: Hypoelliptic vector fields and almost periodic motions on
  the torus {$T^n$}.
\newblock Comm. Partial Differential Equations \textbf{25}(1-2), 337--354
  (2000).
\newblock \doi{10.1080/03605300008821509}.
\newblock \urlprefix\url{https://doi.org/10.1080/03605300008821509}

\bibitem{DR18b}
Delgado, J., Ruzhansky, M.: Fourier multipliers, symbols, and nuclearity on
  compact manifolds.
\newblock J. Anal. Math. \textbf{135}(2), 757--800 (2018).
\newblock \doi{10.1007/s11854-018-0052-9}.
\newblock \urlprefix\url{https://doi.org/10.1007/s11854-018-0052-9}

\bibitem{GPY92}
Gramchev, T., Popivanov, P., Yoshino, M.: Global solvability and
  hypoellipticity on the torus for a class of differential operators with
  variable coefficients.
\newblock Proc. Japan Acad. Ser. A Math. Sci. \textbf{68}(3), 53--57 (1992).
\newblock \urlprefix\url{http://projecteuclid.org/euclid.pja/1195511841}

\bibitem{GW72}
Greenfield, S.J., Wallach, N.R.: Global hypoellipticity and {L}iouville
  numbers.
\newblock Proc. Amer. Math. Soc. \textbf{31}, 112--114 (1972).
\newblock \doi{10.2307/2038523}.
\newblock \urlprefix\url{https://doi.org/10.2307/2038523}

\bibitem{GW73a}
Greenfield, S.J., Wallach, N.R.: Globally hypoelliptic vector fields.
\newblock Topology \textbf{12}, 247--254 (1973).
\newblock \doi{10.1016/0040-9383(73)90011-6}.
\newblock \urlprefix\url{https://doi.org/10.1016/0040-9383(73)90011-6}

\bibitem{GW73b}
Greenfield, S.J., Wallach, N.R.: Remarks on global hypoellipticity.
\newblock Trans. Amer. Math. Soc. \textbf{183}, 153--164 (1973).
\newblock \doi{10.2307/1996464}.
\newblock \urlprefix\url{https://doi.org/10.2307/1996464}

\bibitem{Hou79}
Hounie, J.: Globally hypoelliptic and globally solvable first-order evolution
  equations.
\newblock Trans. Amer. Math. Soc. \textbf{252}, 233--248 (1979).
\newblock \doi{10.2307/1998087}.
\newblock \urlprefix\url{https://doi.org/10.2307/1998087}

\bibitem{Hou82}
Hounie, J.: Globally hypoelliptic vector fields on compact surfaces.
\newblock Comm. Partial Differential Equations \textbf{7}(4), 343--370 (1982).
\newblock \doi{10.1080/03605308208820226}.
\newblock \urlprefix\url{https://doi.org/10.1080/03605308208820226}

\bibitem{KM20}
{Kirilov}, A., {de Moraes}, W.A.A.: Global hypoellipticity for strongly
  invariant operators.
\newblock J. Math. Anal. Appl. \textbf{486}(1), 123878, 14 (2020).
\newblock \doi{10.1016/j.jmaa.2020.123878}.
\newblock \urlprefix\url{https://doi.org/10.1016/j.jmaa.2020.123878}

\bibitem{KMR19}
Kirilov, A., de~Moraes, W.A.A., Ruzhansky, M.: Partial {F}ourier series on
  compact {L}ie groups.
\newblock Bull. Sci. Math. \textbf{160}, 102853, 27 (2020).
\newblock \doi{10.1016/j.bulsci.2020.102853}.
\newblock \urlprefix\url{https://doi.org/10.1016/j.bulsci.2020.102853}

\bibitem{KMR19b}
Kirilov, A., de~Moraes, W.A.A., Ruzhansky, M.: Global hypoellipticity and
  global solvability for vector fields on compact {L}ie groups.
\newblock J. Funct. Anal. \textbf{280}(2), 108806 (2021).
\newblock \doi{10.1016/j.jfa.2020.108806}.
\newblock \urlprefix\url{https://doi.org/10.1016/j.jfa.2020.108806}

\bibitem{Pet11}
Petronilho, G.: Global hypoellipticity, global solvability and normal form for
  a class of real vector fields on a torus and application.
\newblock Trans. Amer. Math. Soc. \textbf{363}(12), 6337--6349 (2011).
\newblock \doi{10.1090/S0002-9947-2011-05359-6}.
\newblock \urlprefix\url{https://doi.org/10.1090/S0002-9947-2011-05359-6}

\bibitem{RT16}
Ruzhansky, M., Tokmagambetov, N.: Nonharmonic analysis of boundary value
  problems.
\newblock Int. Math. Res. Not. IMRN (12), 3548--3615 (2016).
\newblock \doi{10.1093/imrn/rnv243}.
\newblock \urlprefix\url{https://doi.org/10.1093/imrn/rnv243}

\end{thebibliography}

\end{document}